\documentclass[11pt]{article}
\usepackage{latexsym,amssymb,amsmath,amsfonts,amsthm}
\usepackage{graphics}
\usepackage{graphicx}
\usepackage{mathrsfs}
\usepackage{subfigure}
\usepackage[titletoc,page]{appendix}
\newcommand{\mat}{\mathbb}
\newcommand{\R}{{\mat R}}

\newcommand{\N}{{\mat N}}
\newcommand{\C}{{\mat C}}

\newcommand{\ds}{\displaystyle}
\newcommand{\no}{\nonumber}
\newcommand{\ba}{\backslash}
\newcommand{\pa}{\partial}

\newcommand{\ov}{\overline}

\newcommand{\dive}{{\rm div}}

\newcommand{\Om}{\Omega}
\newcommand{\om}{\omega}
\newcommand{\sig}{\sigma}

\newcommand{\na}{\nabla}

\newcommand{\la}{\lambda}

\newcommand{\De}{\Delta}
\newcommand{\wid}{\widetilde}
\newcommand{\bs}{\boldsymbol}

\newcommand{\PML}{{\rm PML}}
\newtheorem{theorem}{Theorem}[section]
\newtheorem{lemma}[theorem]{Lemma}

\newtheorem{remark}[theorem]{Remark}

\providecommand{\Rp}{\operatorname{Re}}
\providecommand{\Ip}{\operatorname{Im}}
\providecommand{\Diag}{\operatorname{diag}}
\begin{document}
\renewcommand{\theequation}{\arabic{section}.\arabic{equation}}

\title{\bf Convergence of the PML method for scattering problems in poroelastic media}
\author{Qianyuan Yin\thanks{Academy of Mathematics and Systems Science, Chinese Academy of Sciences,
Beijing 100190, China and School of Mathematical Sciences, University of Chinese Academy of Sciences,
Beijing 100049, China ({\tt yinqianyuan@amss.ac.cn})}
\and
Changkun Wei\thanks{The corresponding author, Beijing Key Laboratory of Biological Big Data and Topological Statistics, 
School of Mathematics and Statistics, Beijing Jiaotong University,
Beijing 100044, China ({\tt ckwei@bjtu.edu.cn})}
\and
Bo Zhang\thanks{SKLMS and Academy of Mathematics and Systems Science, Chinese Academy of Sciences,
Beijing 100190, China and School of Mathematical Sciences, University of Chinese Academy of Sciences,
Beijing 100049, China ({\tt b.zhang@amt.ac.cn})}
}
\date{}

\maketitle

\begin{abstract}
This paper is concerned with the time-harmonic wave scattering problems in three dimensional 
poroelastic media. By introducing an intermediate variable $p$, the original $\mathbf{u}-\mathbf{w}$ 
system is equivalently transformed into a $\mathbf{u}-p$ system with fewer degrees of freedom, which 
facilitates the derivation of the fundamental solution, Green's identity and positivity of the 
complex wave numbers. A perfectly matched layer (PML) method is then introduced in the spherical 
coordinates to truncate the unbounded scattering problem. Under certain assumptions on the poroelastic 
and PML parameters, we prove the existence and uniqueness of solutions to the PML problems both in the 
truncated domain and layer. Moreover, the exponential convergence of the PML method is established in 
terms of the thickness and parameters of the PML layer. The proof is based on the PML extension and 
the exponential decay properties of the stretched fundamental solution. As far as we know, this is the 
first convergence result of the PML method for poroelastic scattering problems.

\vspace{.2in}
{\bf Keywords:} Biot, poroelastic wave equations, PML, convergence
\end{abstract}

\section{Introduction}
\setcounter{equation}{0}

Poroelastic wave scattering problem can be effectively modeled by Biot's theory, which extends conventional acoustic
or pure elastic models by incorporating factors such as fluid, porosity, pressure, etc \cite{Biot1956I,Biot1956II}.
Biot's theory of poroelasticity is highly relevant to many fields of applications such as geophysical exploration 
of oil or gas, bio-engineering and materials science \cite{Boer2012,Schanz2003}. The study on this problem has been 
a subject of interest in both mathematical and engineering communities (see \cite{Schanz2009,Yamamoto2004,Zhang2021,
Zhang2025} and the reference therein).

The truncation of the unbounded domain remains a challenging topic for wave scattering problems. Absorbing boundary 
conditions (ABCs) play a crucial role in truncation for both mathematical analysis and numerical computation \cite{HW2013},
among which the perfectly matched layer (PML) method is widely used due to its simplicity, flexibility and effectiveness. 
Originally proposed by Bérenger in 1994 for solving time-dependent Maxwell's equations \cite{Berenger1994}, the PML method 
surrounds the computational domain with a specially designed medium in a finite thickness layer in which the scattered waves 
decay rapidly regardless of the wave incident angle, thereby greatly reducing the computational complexity of the scattering 
problem. A fundamental and challenging issue about PML method is the convergence analysis, i.e., the error estimate in the 
computational domain between the solutions for the original scattering problem and the truncated PML problem, since the 
truncated boundary generates reflected waves which will contaminate the solution in practical computation. 

The convergence analysis of the PML method has been widely studied for the acoustic and electromagnetic wave scattering problems;
see, e.g., \cite{BW2005,Bramble2007,BP2013,CL2005,1998On,wyz2020,wyz2021}. Several works are also available on the convergence of 
the PML method for the elastic wave scattering problems. For the elastic obstacle scattering problem, the exponential convergence
of a spherical PML method was established in \cite{BPT2010}, and the exponential convergence of a Cartesian PML method were 
established in \cite{CXZ2016}. For the elastic scattering problems in periodic structures, the exponential convergence for the
proposed PML method was established in \cite{Jiang2017,Jiang2018} for both one-dimensional and two-dimensional cases. However,
the convergence analysis of the PML methods is quite rare for the relatively complicated interaction scattering problems between 
multi-physical fields. We refer to \cite{JL2017,Zhu2024} for the exponential convergence result of the PML method for the 
acoustic-elastic and electromagnetic-elastic interaction scattering problems. Recently, Yin et al. \cite{ywz2026} established the
well-poseness and exponential convergence of the PML method for the three-dimensional thermoelastic wave scattering problems. 
To the best of our knowledge, there is no convergence
theory of the PML method for poroelastic scattering problems.

In this paper, we focus on the theoretical analysis for three-dimensional poroelastic obstacle scattering problems. 
Biot's initial formulation $\mathbf{u}-\mathbf{w}$ is confined to the displacements of solid and fluid phases. 
The time-harmonic poroelastic wave equations are reformulated by replacing the fluid displacement with pressure $p$, 
thereby reducing the number of degrees of freedom. An abstract $\mathbf{u}-p$ system is thus obtained that incorporates 
thermoelastic and poroelastic wave models into a unified framework, thereby addressing the key issues of the mathematical model,
such as the derivation of the fundamental solution and Green's identity. We prove that the real and imaginary part of the three
wave numbers for the poroelastic system are positive under some reasonable constraints on these poroelatic parameters. This fact
plays a key role in the PML analysis for the poroelastic scattering problems. Finally, we propose a spherical PML method to truncate
the unbounded poroelastic scattering domain into a bounded computational domain. The complex spherical coordinate stretching to
derive the PML problem is \cite{Chew1997}:
\begin{equation*}
\wid{r}(r)=\int_0^{r} s(\tau)d\tau, \; s=1+{\bf i}\sig\;(\sigma\geq 0)
\end{equation*}
where the continuous function $s(\tau)$ is the model medium property. We remark that the PML equation is derived directly from
the original $\mathbf{u}-\mathbf{w}$ formulation to make use of the symmetry of the system. Under some assumptions on the 
poroelastic and PML medium parameters, we prove the existence and uniqueness of the solutions to the PML equation in the truncated 
domain and layer.  We estimate the PML extension of the poroelastic boundary integral representation by the geometric and polynomial
methods. The exponential convergence of the PML method is established in terms of the thickness and parameters of the PML layer.

The paper is organized as follows. In section \ref{sec:scattering model}, we present the mathematical model for the time‑harmonic poroelastic scattering problem, introduce an abstract $\mathbf{u}-p$ system, and derive the fundamental solution and Green’s identity, as well as establish the positivity of the complex wave numbers. Section \ref{sec:PML} is devoted to propose the PML method for the poroelastic scattering problem based on a complex spherical coordinate stretching.  The well-posedness for the poroelastic PML equation in the truncated domain 
and layer are then studied and carefully analyzed, and the exponential convergence result
is finally established for the proposed PML method. Some conclusions are given in section \ref{sec:conclusion}.

\section{Functional spaces}\label{sec:fs}
\setcounter{equation}{0}

In this section, we introduce the functional spaces needed in this paper. Let $D\subset  \R^3$ be a domain with smooth boundary, 
and let $\bs{n}$ be the unit outward normal vector on its boundary $\pa D$. For $m\in\N$, denote the standard scalar Sobolev 
space by 
\begin{equation*}
H^m(D)=\{ \pa_{\alpha}u\in L^2(D)\;{\rm for\;all}\;\lvert \alpha \rvert \leq m\},
\end{equation*}
endowed with the norm 
\begin{equation*}
\Vert u\Vert_{H^m(D)}=\left(\ds\sum_{|\alpha|\leq m}\Vert\pa_{\alpha}u\Vert_{L^2(D)}^2\right)^{\frac{1}{2}}.
\end{equation*}
The space $H_0^{m}(D)$ is the closure of $C_0^{\infty }(D)$ in $H^m(D)$ with respect to the norm $\Vert \cdot 
\Vert _{H^m(D)}$.

Let $ H^1(D)^3 $ be the Cartesian product spaces equipped with the corresponding $2$-norms of $ H^1(D) $. 
For a vector field $\bs{v}=(v_1,v_2,v_3)^{\top}\in H^1(D)^3$, define the gradient tensor
\begin{equation*}
\na\bs{v}=\left[
\begin{matrix}
\pa_{x_1}v_1 & \pa_{x_2}v_1 & \pa_{x_3} v_1 \\
\pa_{x_1}v_2 & \pa_{x_2}v_2 & \pa_{x_3} v_2 \\
\pa_{x_1}v_3 & \pa_{x_2}v_3 & \pa_{x_3} v_3
\end{matrix}\right],
\end{equation*}
with the Frobenius norm
\begin{equation*}
\|\nabla \bs{v}\|_{L^2(D)^{3 \times 3}} := \left( \int_{D} \nabla \bs{v}:\nabla \overline{\bs{v}} dx 
\right)^{1/2}=\left( \sum_{j=1}^3 \int_{D} |\nabla v_j|^2 dx \right)^{1/2},
\end{equation*}
where $A:B={\rm tr}(AB^{\top})$ denotes the Frobenius inner product of the square matrices $A$ and $B$.

For a vector field $\bs{v}=(v_1,v_2,v_3)^{\top}$, we introduce the functional spaces
\begin{equation*}
\begin{aligned}
H(\dive,D) &= \left\{\bs{v}\in L^2(D)^3: \nabla \cdot\bs{v}\in L^2(D)\right\}, \\
H_0(\dive,D) &= \left\{\bs{v}\in H(\dive,D): \bs{n}\cdot \bs{v}=0\;{\rm on}\;\pa D\right\}.
\end{aligned}
\end{equation*}
For a bounded domain $D$, we define the following local spaces by
\begin{equation*}
\begin{aligned}
H_{loc}^1(\R^3\ba\ov{D}) &= \{u:\ \R^3\ba\ov{D}\to\mathbb{C}:u|_B\in H^1(B)\;\text{for all balls}\;B\},\\
H_{loc}(\dive,\R^3\ba\ov{D}) &= \{\bs{v}:\ \R^3\ba\ov{D}\to\mathbb{C}^3:\bs{v}|_B\in H(\dive,B)
\;\text{for all balls}\;B\}.
\end{aligned}
\end{equation*}

\section{Poroelastic scattering problem}\label{sec:scattering model}
\setcounter{equation}{0}

In this section, we present the mathematical formulation of the three-dimensional poroelastic 
scattering problem based on Biot's theory. By introducing an intermediate variable $p$, the 
original $\mathbf{u}-\mathbf{w}$ system is equivalently reformulated as a $\mathbf{u}-p$ system, 
which allows us to derive the fundamental solution and establish Green's identity. Under appropriate 
assumptions on the poroelastic parameters, we prove that the real and imaginary parts of the wave 
numbers are positive. The well-posedness of the exterior problem is also discussed.

We first introduce some basic notions to be used throughout this paper. Denote by $x=(x_1,x_2,x_3)^{\top}$ 
a generic point in $\R^3$. For a bounded domain $D$ with boundary by $\pa D$, its complementary set in $ \R^3$ 
is denoted by $D^c= \R^3\ba\ov{D}$. Let $B_r=\{x\in \R^3:|x|<r\}$ be the open ball of radius $r$ centered 
at origin with boundary $S_r=\{x\in \R^3:|x|=r\}$. In the time domain, the linearized poroelastic wave 
equations are \cite{Philippacopoulos1998Spectral}
\begin{align*}
\frac{\pa^2}{\pa t^2}(\rho\mathbf{u} +\rho_f \mathbf{w})&=\nabla \nabla \cdot [(\la_c+2\mu)\mathbf{u}
+\alpha M \mathbf{w}]-\mu \nabla \times \nabla \times \mathbf{u},\\
\frac{\pa^2}{\pa t^2}(\rho_f \mathbf{u}+m \mathbf{w})+ b \frac{\mathbf{w}}{\pa t}&=\nabla \nabla \cdot 
[\alpha M \mathbf{u}+ M \mathbf{w}],
\end{align*}
where the displacement field $\mathbf{u}$ represents the motion of the solid phase, and $\mathbf{w}$ 
is the pore fluid relative to the skeletal frame. By the time-harmonic convention $e^{-{\bf{i}}\omega t}$
for angular frequency and time, we take the complex valued functions $\mathbf{u}(x,t)=e^{-{\bf{i}}\omega t}
\mathbf{u}(x)$ and $\mathbf{w}(x,t)=e^{-{\bf{i}}\omega t}\mathbf{w}(x)$. This leads to the following 
time-harmonic Biot's system 
\begin{align}\label{poroelastic1}
\omega^2(\rho\mathbf{u} +\rho_f \mathbf{w}) &= -\nabla \nabla \cdot [(\la_c+2\mu)\mathbf{u}
+\alpha M \mathbf{w}]+\mu\nabla \times \nabla \times \mathbf{u},  \\ \label{poroelastic2}
\omega^2(\rho_f \mathbf{u}+m \mathbf{w})+{\bf{i}}\omega b \mathbf{w} &= -\nabla \nabla \cdot 
[\alpha M \mathbf{u}+ M \mathbf{w}].
\end{align}
The parameters for the Biot's system throughout this paper are listed in Table \ref{para_meaning}.
\begin{table}[htbp]
\centering
\caption{Physical meaning of the parameters for the poroelastic system}
\vspace{5pt}
\begin{tabular}{|c|c|} \hline
Notation & Physical meaning\\ \hline
$\la,\mu$ & Lam\'{e} constants \\ \hline
$\alpha$,$M$ & parameters of Biot's theory \\ \hline
$\la_c=\la+\alpha^2 M$ & modified Lam\'{e} constants \\ \hline
$\omega$ & frequency \\ \hline
$b$ &  dissipation coefficient \\ \hline
$\rho$ & mass density of bulk material \\ \hline
$\rho_f$ & mass density of pore fluid \\ \hline
$m$ & coefficient having units of mass denesity \\ \hline
\end{tabular}
\label{para_meaning}
\end{table}

Let $\Om$ be a bounded obstacle with smooth boundary $\pa \Om$. From the vector identity $\nabla \times 
\nabla \times \mathbf{u}=\nabla \nabla \cdot \mathbf{u}-\Delta\mathbf{u}$ and Biot's system 
\eqref{poroelastic1}-\eqref{poroelastic2}, the poroelastic scattering problem with homogeneous Dirichlet 
boundary condition can then be modeled by the following PDE boundary-value problem 
\begin{subequations}
\begin{align}\label{sporoelastic1}
\omega^2(\rho\mathbf{u} +\rho_f \mathbf{w})+\Delta_c^*\mathbf{u}+\alpha M \nabla \nabla \cdot 
\mathbf{w}={\bf Q} \quad &{\rm in} \quad \Om^c,\\\label{sporoelastic2}
\omega^2(\rho_f \mathbf{u}+m \mathbf{w})+{\bf{i}}\omega b \mathbf{w}+\nabla \nabla \cdot 
(\alpha M \mathbf{u}+ M \mathbf{w})=0\quad &{\rm in} \quad \Om^c,\\\label{sDBC2}
(\mathbf{u}^{\top},\bs{n}\cdot \mathbf{w})^{\top}=0 \quad&{\rm on}\quad \pa \Om,
\end{align}
\end{subequations}
where the source ${\bf Q}$ is assumed to have compact support inside $B_{r_0}$ with some $r_0>0$, and
the modified Lam\'{e} operator $\De_c^*$ is defined by
\begin{equation*}
\De_c^*\mathbf{u}:=\mu \De \mathbf{u}+(\la_c +\mu)\na \na \cdot \mathbf{u} =\na \cdot \sig_c(\mathbf{u}).
\end{equation*}
Here, the associated symmetric stress tensor $\sig_c(\mathbf{u})$ is given by the generalized Hooke's law
\begin{equation*}
\sig_c(\mathbf{u})=(\la_c \na \cdot \mathbf{u} )\mat{I}_3+2\mu \epsilon(\mathbf{u}), \quad 
\epsilon(\mathbf{u})=\frac{1}{2}(\na \mathbf{u}+(\na \mathbf{u})^{\top}).
\end{equation*}
Assume that a homogeneous poroelastic system admits a smooth solution $(\mathbf{u}^{\top},\mathbf{w}^{\top})^{\top}$ 
to the Biot's equations \eqref{sporoelastic1}-\eqref{sporoelastic2} in $B_{r_0}^c$ (with ${\bf Q}=0$). 
Using the Helmholtz decomposition
\begin{equation*}
\mathbf{u}=\nabla \phi_1 +\Psi_1,\;\mathbf{w}=\nabla \phi_2 +\Psi_2,\; \dive\;{\Psi_1}=\dive\;{\Psi_2}=0,
\end{equation*}
the Biot's equations  \eqref{sporoelastic1}-\eqref{sporoelastic2} are then transformed into the following equations
\begin{subequations}
\begin{align}
\Psi_2 &=\frac{\rho_f\omega({\bf{i}}b-\omega m)}{\omega^2m^2+b^2}\Psi_1,\label{HeD1} \\
\omega^2\rho\Psi_1+\frac{\omega\rho_f^2({\bf{i}}b-\omega m)}{\omega^2m^2+b^2}\Psi_1&=-\mu\Delta\Psi_1,	\label{HeD2}\\
\omega^2\rho\phi_1+\omega^2\rho_f\phi_2&=-[(\la_c+2\mu)\Delta\phi_1+\alpha M \Delta\phi_2],	\label{HeD3}\\
\omega^2\rho_f\phi_1+(\omega^2m+{\bf{i}}\omega b)\phi_2&=-(\alpha M \Delta\phi_1+ M \Delta\phi_2).	\label{HeD4}
\end{align}
\end{subequations}
Observing the equations \eqref{HeD3}-\eqref{HeD4}, we find that $\phi_1$ and $\phi_2$ satisfy the following equation
\begin{equation}\label{HeD4_1}
\Delta^2 \phi +\left(\frac{c_{11}+c_{22}}{c_{21}c_{12}-c_{11}c_{22}}\right) \Delta \phi
-\frac{1}{c_{21}c_{12}-c_{11}c_{22} }\phi=0,
\end{equation}
where
\begin{equation*}
\begin{aligned}
c_{11}& =-\frac{\begin{vmatrix}
\la_c+2\mu&	\omega^2\rho_f\\
\alpha M 	&\omega^2m+{\bf{i}}\omega b
\end{vmatrix}}{\begin{vmatrix}
\omega^2\rho&	\omega^2\rho_f\\
\omega^2\rho_f	&\omega^2m+{\bf{i}}\omega b
\end{vmatrix}},
& c_{12} &=-\frac{\begin{vmatrix}
\alpha M &	\omega^2\rho_f\\
M 	&\omega^2m+{\bf{i}}\omega b
\end{vmatrix}}{\begin{vmatrix}
\omega^2\rho&	\omega^2\rho_f\\
\omega^2\rho_f	&\omega^2m+{\bf{i}}\omega b
\end{vmatrix}},\\
c_{21}& =-\frac{\begin{vmatrix}
\omega^2\rho&	\la_c+2\mu\\
\omega^2\rho_f	&\alpha M 
\end{vmatrix}}{\begin{vmatrix}
\omega^2\rho&	\omega^2\rho_f\\
\omega^2\rho_f	&\omega^2m+{\bf{i}}\omega b
\end{vmatrix}},
& c_{22} &=-\frac{\begin{vmatrix}
\omega^2\rho&	\alpha M\\
\omega^2\rho_f	&	M 
\end{vmatrix}}{\begin{vmatrix}
\omega^2\rho&	\omega^2\rho_f\\
\omega^2\rho_f	&\omega^2m+{\bf{i}}\omega b
\end{vmatrix}}.
\end{aligned}
\end{equation*}
Rewrite the equation \eqref{HeD4_1} in the following form
\begin{equation}\label{HeD5}
(\Delta+\la_1^2) (\Delta+\la_2^2)\phi=0,
\end{equation}
where $\la_1$ and $\la_2$ satisfy the characteristic relation equations 
\begin{equation}\label{charac}
\la_1^2+\la_2^2=\frac{c_{11}+c_{22}}{c_{21}c_{12}-c_{11}c_{22}},\quad\la_1^2\la_2^2=
-\frac{1}{c_{21}c_{12}-c_{11}c_{22}}.
\end{equation}
Following the routine of \cite{Kupradze1979}, the poroelastic wave field $\mathbf{u}$ satisfies the decomposition lemma.
\begin{lemma}
The wave field $\mathbf{u}$ can be written in the form
$\mathbf{u}=\mathbf{u}^1+\mathbf{u}^2+\mathbf{u}^3,$
where $\mathbf{u}^i,i=1,2,3$ satisfy 
\begin{equation*}
\begin{aligned}
(\Delta +\la_1^2)\mathbf{u}^1 &=0, & (\Delta +\la_2^2)\mathbf{u}^2& =0, & (\Delta +\la_3^2)\mathbf{u}^3&=0, \\
{\rm curl}\mathbf{u}^1&=0, & {\rm curl}\mathbf{u}^2&=0, & \dive\mathbf{u}^3&=0.
\end{aligned}
\end{equation*}
Here, the wave number $\la_1,\la_2$ are the roots of the characteristic relation equations \eqref{charac}, 
and the complex wave number $\la_3$ is given by
\begin{equation*}
\la_3^2=\ds\frac{\omega^2\rho}{\mu}+\frac{\omega^3\rho_f^2({\bf{i}}b-\omega m)}{\mu(\omega^2m^2+b^2)}.
\end{equation*}
\end{lemma}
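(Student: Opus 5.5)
We follow the classical Kupradze construction. From the Helmholtz decomposition $\mathbf{u}=\na\phi_1+\Psi_1$, $\mathbf{w}=\na\phi_2+\Psi_2$ we split the problem into a curl-free part $\na\phi_1$ and a divergence-free part $\Psi_1$. We set $\mathbf{u}^3:=\Psi_1$. Then $\dive\,\mathbf{u}^3=0$ by construction. Dividing \eqref{HeD2} by $\mu$ gives $(\De+\la_3^2)\mathbf{u}^3=0$, with $\la_3^2$ exactly as in the statement.

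For the curl-free part, we use that $\phi_1$ (and likewise $\phi_2$) satisfies \eqref{HeD4_1}. To see this, solve \eqref{HeD3}-\eqref{HeD4} for $\phi_1,\phi_2$ in terms of $\De\phi_1,\De\phi_2$, i.e.\ invert the mass matrix $\begin{pmatrix}\omega^2\rho&\omega^2\rho_f\\ \omega^2\rho_f&\omega^2m+{\bf i}\omega b\end{pmatrix}$. This yields $\phi_j=c_{j1}\De\phi_1+c_{j2}\De\phi_2$ with the $c_{ij}$ of the paper, and eliminating $\phi_2$ gives the fourth-order equation. By \eqref{charac} this factors as \eqref{HeD5}: $(\De+\la_1^2)(\De+\la_2^2)\phi_1=0$.

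Assuming $\la_1^2\neq\la_2^2$, we define
\begin{equation*}
\mathbf{u}^1:=\frac{1}{\la_2^2-\la_1^2}\na(\De+\la_2^2)\phi_1,\qquad
\mathbf{u}^2:=-\frac{1}{\la_2^2-\la_1^2}\na(\De+\la_1^2)\phi_1 .
\end{equation*}
Their sum is $\frac{1}{\la_2^2-\la_1^2}\na\big((\la_2^2-\la_1^2)\phi_1\big)=\na\phi_1$, so $\mathbf{u}=\mathbf{u}^1+\mathbf{u}^2+\mathbf{u}^3$. Both $\mathbf{u}^1$ and $\mathbf{u}^2$ are gradients, hence curl-free. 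Since $\De$ commutes with $\na$ and with constant-coefficient operators, \eqref{HeD5} gives $(\De+\la_1^2)\mathbf{u}^1=\frac{1}{\la_2^2-\la_1^2}\na(\De+\la_1^2)(\De+\la_2^2)\phi_1=0$, and similarly $(\De+\la_2^2)\mathbf{u}^2=0$.

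It remains to make the Helmholtz decomposition legitimate in the exterior domain $B_{r_0}^c$. Given a smooth solution, we can avoid potentials altogether and write the pieces directly in terms of $\mathbf{u}$ and $\mathbf{w}$:
\begin{equation*}
\mathbf{u}^3=-\frac{1}{\la_3^2}\,{\rm curl}\,{\rm curl}\,\mathbf{u}\quad\text{(up to normalization)},\qquad \na\phi_1=\mathbf{u}-\mathbf{u}^3 .
\end{equation*}
Applying curl to \eqref{sporoelastic2} kills the gradient terms, which gives ${\rm curl}\,\mathbf{w}$ as a multiple of ${\rm curl}\,\mathbf{u}$; this is \eqref{HeD1}. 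Substituting into the curl of \eqref{sporoelastic1} shows that ${\rm curl}\,\mathbf{u}$ satisfies $(\De+\la_3^2)\,{\rm curl}\,\mathbf{u}=0$. Consequently $\mathbf{u}^3$ is divergence-free, satisfies the Helmholtz equation, and $\mathbf{u}-\mathbf{u}^3$ is curl-free.

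In the same spirit, $\mathbf{u}^1$ and $\mathbf{u}^2$ can be written through $\na\dive\,\mathbf{u}$ and $\na\dive\,\mathbf{w}$. Applying divergence to the system yields the $2\times2$ system \eqref{HeD3}-\eqref{HeD4} for $\dive\,\mathbf{u}$ and $\dive\,\mathbf{w}$. From this we define
\begin{equation*}
\mathbf{u}^1=\frac{1}{\la_2^2-\la_1^2}(\De+\la_2^2)(\mathbf{u}-\mathbf{u}^3),\qquad \mathbf{u}^2=-\frac{1}{\la_2^2-\la_1^2}(\De+\la_1^2)(\mathbf{u}-\mathbf{u}^3).
\end{equation*}

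The main obstacle is justifying that the algebra holds: the mass determinant $\omega^2\rho(\omega^2m+{\bf i}\omega b)-\omega^4\rho_f^2$ must be nonzero, and $\la_1^2\neq\la_2^2$, $\la_3\neq0$ must hold. The mass determinant has imaginary part ${\bf i}\omega^3\rho b\neq0$ when $b>0$, so it does not vanish. The distinctness of $\la_1^2$ and $\la_2^2$ we would assume, or derive from the discriminant under the paper's later parameter assumptions.
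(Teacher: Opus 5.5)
Your main line is the Kupradze routine the paper invokes without further detail, and it is correct. It consists of $\mathbf{u}^3=\Psi_1$ via \eqref{HeD1}--\eqref{HeD2}, the fourth-order equation \eqref{HeD4_1} factored as \eqref{HeD5}, and the partial-fraction split of $\nabla\phi_1$. One caveat: \eqref{HeD2} as printed has $\omega\rho_f^2$. Substituting \eqref{HeD1} into the solenoidal part of \eqref{poroelastic1} gives $\omega^3\rho_f^2$, i.e.\ $\la_3^2=\frac{1}{\mu}\bigl(\omega^2\rho-\frac{\omega^3\rho_f^2}{\omega m+{\bf i}b}\bigr)$, which is what the lemma states.

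Your potential-free paragraph goes beyond the paper, which never justifies the Helmholtz splitting in $B_{r_0}^c$. As written, however, it has two defects.

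First, the normalization is forced, not free. Since ${\rm curl}\,{\rm curl}\,\Psi_1=-\Delta\Psi_1=\la_3^2\Psi_1$, you need $\mathbf{u}^3=+\la_3^{-2}\,{\rm curl}\,{\rm curl}\,\mathbf{u}$. With your minus sign, ${\rm curl}(\mathbf{u}-\mathbf{u}^3)=2\,{\rm curl}\,\mathbf{u}$, which is not zero in general.

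Second, the Helmholtz equations for your $\mathbf{u}^1,\mathbf{u}^2$ require $(\Delta+\la_1^2)(\Delta+\la_2^2)(\mathbf{u}-\mathbf{u}^3)=0$. That $\dive\mathbf{u}$ and $\dive\mathbf{w}$ satisfy the fourth-order equation yields, via $\Delta(\mathbf{u}-\mathbf{u}^3)=\nabla\dive\mathbf{u}$, only a sixth-order equation. Your remark that $\mathbf{u}^1,\mathbf{u}^2$ ``can be written through $\nabla\dive\mathbf{u}$ and $\nabla\dive\mathbf{w}$'' is the step carrying the weight, and it must be made explicit.

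Both points are settled by inverting the mass matrix on the full vector system \eqref{poroelastic1}--\eqref{poroelastic2}, just as you did for the potentials:
\begin{equation*}
\mathbf{u}=\nabla\bigl(c_{11}\dive\mathbf{u}+c_{12}\dive\mathbf{w}\bigr)+\frac{\mu(\omega^2m+{\bf i}\omega b)}{\omega^2\rho(\omega^2m+{\bf i}\omega b)-\omega^4\rho_f^2}\,{\rm curl}\,{\rm curl}\,\mathbf{u}.
\end{equation*}
The last coefficient equals $\la_3^{-2}$. Hence $\mathbf{u}-\mathbf{u}^3=\nabla\Phi$ with $\Phi=c_{11}\dive\mathbf{u}+c_{12}\dive\mathbf{w}$. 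This $\Phi$ is annihilated by $(\Delta+\la_1^2)(\Delta+\la_2^2)$, because $(\dive\mathbf{u},\dive\mathbf{w})$ solves the same $2\times 2$ system as $(\phi_1,\phi_2)$. Your partial-fraction formulas then apply verbatim with $\Phi$ in place of $\phi_1$.
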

Throughout this paper, we assume that for $z\in\mathbb{C} $, the notation  $ z^{1/2} $ denotes the 
analytic branch of $\sqrt{z}$ satisfying $ \operatorname{Re}(z^{1/2})\geq 0$. This choice corresponds 
to taking the left half of the real axis as the branch cut in the complex plane.  For $z = z_1 + {\bf i} z_2$ 
with $z_1, z_2\in\R $, we have
\begin{equation}\label{z1/2}
z^{1/2} = \left(\frac{|z| + z_1}{2}\right)^{\frac{1}{2}} + {\bf i} \operatorname{sgn}(z_2)
\left(\frac{|z| - z_1}{2}\right)^{\frac{1}{2}}.
\end{equation}
For $z$ on the left half real axis, $ z^{1/2} $ is understood as the limit of  $(z + {\bf i}
 \varepsilon)^{1/2} $ as $ \varepsilon \to 0^+$.

The scattered wave field $\mathbf{u}$ and $\mathbf{w}$ are assumed to satisfy the Kupradze radiation 
conditions as $r=\sqrt{x_1^2+x_2^2+x_3^2} \rightarrow \infty$ for $i=1,2,3$ and $j=1,2$
\begin{equation}\label{Kupradze}
\begin{aligned}
&\mathbf{u}^j=o(r^{-1}), & &\pa_{x_i}\mathbf{u}^j=O(r^{-2}), \\
&\bs{n}\cdot\mathbf{w}=o(r^{-1}), & &\dive\mathbf{w}=O(r^{-2}), \\
&\mathbf{u}^3=o(r^{-1}), & &r(\pa_{r}\mathbf{u}^3-{\bf{i}}\la_3\mathbf{u}^3)=O(r^{-1}).
\end{aligned}
\end{equation}

For given $\bs{f}_s=(f_1,f_2,f_3)^{\top}\in H^{\frac{1}{2}}(S_{r_0})^3$ and $f_4\in H^{\frac{1}{2}}(S_{r_0})$,  
the following lemma establishes the uniqueness of solutions to the exterior Dirichlet problem
\begin{equation}\label{eDpp1}
\begin{cases}
\omega^2(\rho\mathbf{u} +\rho_f \mathbf{w})+\Delta_c^*\mathbf{u}+\alpha M \nabla \nabla \cdot \mathbf{w}=0 
\quad {\rm in}\;\;B_{r_0}^c,\\
\omega^2(\rho_f \mathbf{u}+m \mathbf{w})+{\bf{i}}\omega b \mathbf{w}+\nabla \nabla \cdot (\alpha M \mathbf{u}
+ M \mathbf{w})=0 \quad {\rm in}\;\;B_{r_0}^c,\\
(\mathbf{u}^{\top},	\bs{n}\cdot \mathbf{w})^{\top}=(\bs{f}_s^{\top},f_4)^{\top} \quad {\rm on} \;\;S_{r_0},\\
(\mathbf{u}^{\top},	\mathbf{w}^{\top})^{\top}\;\text{satisfies the Kupradze radiation conditions 
\eqref{Kupradze} at infinity}.
\end{cases}
\end{equation}
\begin{lemma}
The exterior Dirichlet problem \eqref{eDpp1} has at most one weak solution in $H_{loc}^1(B_{r_0}^c)^6$. 
\begin{proof}
It is equivalent to show that the homogeneous problem (i.e., with $\bs{f}_s = 0$ and $f_4 = 0$) admits 
only the trivial solution. Let $(\mathbf{u}^\top, \mathbf{w}^\top)^\top$ be any solution to this 
homogeneous problem, and denote its complex conjugate by $(\overline{\mathbf{u}}^\top, 
\overline{\mathbf{w}}^\top)^\top$. Direct calculation yields
\begin{equation}\label{conjugatee}
\begin{aligned}
0=\;&\omega^2(\rho\mathbf{u}\cdot \overline{\mathbf{u}} +\rho_f \mathbf{w}\cdot \overline{\mathbf{u}})
+\Delta_c^*\mathbf{u}\cdot \overline{\mathbf{u}}+\alpha M(\nabla \nabla \cdot  \mathbf{w})\cdot \overline{\mathbf{u}}\\
&+\omega^2(\rho_f \mathbf{u}\cdot \overline{\mathbf{w}}+m \mathbf{w}\cdot \overline{\mathbf{w}})
+{\bf{i}}\omega b \mathbf{w}\cdot \overline{\mathbf{w}}+\alpha M(\nabla \nabla \cdot  \mathbf{u})
\cdot \overline{\mathbf{w}}+ M(\nabla \nabla \cdot  \mathbf{w})\cdot \overline{\mathbf{w}}.  
\end{aligned} 
\end{equation}
Integrating both sides of \eqref{conjugatee} over $B_{R}\ba\ov{B_{r_0}}\;(R>r_0)$, and using the 
boundary condition on $S_{r_0}$, we obtain
\begin{align}\label{boundarye1}
&\int_{S_R}\left[\mathcal{T}_c\mathbf{u}\cdot\overline{\mathbf{u}} +\alpha M( \nabla \cdot 
\mathbf{w})(\bs{n}\cdot \overline{\mathbf{u}})+\alpha M(\nabla \cdot  \mathbf{u})(\bs{n} 
\cdot \overline{\mathbf{w}})+M( \nabla \cdot  \mathbf{w})(\bs{n}\cdot \overline{\mathbf{w}})\right]ds(x) \\\no
=&\int _{B_{R}\ba\ov{B_{r_0}}}\Big[-\omega^2(\rho|\mathbf{u}|^2 +\rho_f \mathbf{w}\cdot 
\overline{\mathbf{u}})+\mu\nabla\mathbf{u}:\nabla\overline{\mathbf{u}} +(\la_c+\mu)|\nabla 
\cdot \mathbf{u}|^2+\alpha M( \nabla \cdot  \mathbf{w})(\nabla\cdot \overline{\mathbf{u}})\\\no
&-\omega^2(\rho_f \mathbf{u}\cdot \overline{\mathbf{w}}+m |\mathbf{w}|^2)-{\bf{i}}\omega b 
|\mathbf{w}|^2+\alpha M(\nabla \cdot  \mathbf{u})(\nabla \cdot \overline{\mathbf{w}})+ 
M|\nabla \cdot  \mathbf{w}|^2\Big]dx,
\end{align}
where the traction operator $\mathcal{T}_c$ is given by $\mathcal{T}_c\mathbf{u}
=\mu\pa_{\bs{n}}\mathbf{u}+(\la_c+\mu)(\dive\mathbf{u})\bs{n}$ on $S_{R}$.
Taking the imaginary part on both sides of the equation \eqref{boundarye1} gives
\begin{align}\label{imagi_rela}
&-\omega b \int _{B_{R}\ba\ov{B_{r_0}}}|\mathbf{w}|^2dx\\\no
=&\Ip \int_{S_R}\left[\mathcal{T}_c\mathbf{u}\cdot\overline{\mathbf{u}} +\alpha M( \nabla \cdot
\mathbf{w})(\bs{n}\cdot \overline{\mathbf{u}})+\alpha M(\nabla \cdot  \mathbf{u})(\bs{n} \cdot
\overline{\mathbf{w}})+M( \nabla \cdot  \mathbf{w})(\bs{n}\cdot \overline{\mathbf{w}})\right]ds(x).  
\end{align}
The right-hand of \eqref{imagi_rela} tends to zero as $R$ tends to infinity from the Kupradze 
radiation condition \eqref{Kupradze}. Hence $\mathbf{w}$ must vanish identically. Setting 
$\mathbf{w}=0$ in \eqref{eDpp1} reduces the system to
\begin{equation}\label{eDpph2}
\begin{cases}
\Delta_c^*\mathbf{u}+\omega^2\rho\mathbf{u}=0\quad {\rm in}\;\;B_{r_0}^c,\\
\alpha M\nabla \nabla \cdot \mathbf{u}+\omega^2\rho_f \mathbf{u}=0\quad {\rm in}\;\;B_{r_0}^c,\\
\mathbf{u}=0\quad {\rm on} \;\;S_{r_0},\\
\mathbf{u}\;\text{satisfies the Kupradze radiation conditions \eqref{Kupradze} at infinity}.
\end{cases}
\end{equation}
From the classical elastic wave scattering theory \cite{BP2008}, it follows that the solution 
$\mathbf{u}$ to the system \eqref{eDpph2} must also vanish identically. The proof is thus complete.
\end{proof}	
\end{lemma}
To study the existence of solutions to the exterior Dirichlet problem \eqref{eDpp1}, we introduce 
an intermediate variable $p$ defined by
\begin{equation}\label{p_variable}
-p=\nabla\cdot[\alpha M \mathbf{u}+M \mathbf{w}].
\end{equation}
This, combined with the second equation in the system \eqref{eDpp1} gives
\begin{equation}\label{porop}
\mathbf{w}=\frac{1}{\omega^2m+{\bf{i}}\omega b}\nabla p -\frac{\omega\rho_f}{{\omega m+\bf{i}}b}\mathbf{u}.
\end{equation}
Taking the divergence on both sides of the equality \eqref{porop} yields
\begin{equation}\label{divporop}
\nabla\cdot \mathbf{w}=\frac{1}{\omega^2m+{\bf{i}}\omega b}\Delta p -\frac{\omega\rho_f}{{\omega m
+\bf{i}}b}\nabla \cdot \mathbf{u}.	
\end{equation}  
Using \eqref{p_variable}-\eqref{divporop} together with the first equation in \eqref{eDpp1} and 
eliminating $\mathbf{w}$, the problem \eqref{eDpp1} is transformed into the following $\mathbf{u}-p$ system:
\begin{equation}\label{laplaced1}
\begin{cases}
\ds\Delta^*\mathbf{u}+\Big(\omega^2 \rho -\frac{\omega^3 \rho_f^2}{{\omega m+\bf{i}}b}\Big)\mathbf{u}
+\Big(\frac{\omega\rho_f}{{\omega m+\bf{i}}b}-\alpha\Big)\nabla p=0 \quad {\rm in}\;\;B_{r_0}^c,\\
\ds\Delta p+(-\omega^2\rho_f+{\bf{i}}\alpha\omega b+\alpha\omega^2m)\dive\mathbf{u}+\frac{\omega^2m
+{\bf{i}}\omega b}{M}p=0 \quad {\rm in}\;\;B_{r_0}^c,\\
\ds\mathbf{u}=\bs{f}_s,\;\frac{\pa p }{\pa \bs{n}}=(\omega^2m+{\bf{i}}\omega b)f_4+
\omega^2\rho_f\bs{n}\cdot \bs{f}_s \quad {\rm on} \;\;S_{r_0},\\
(\mathbf{u}^{\top},p)^{\top}\;\text{satisfies the corresponding Kupradze radiation conditions at infinity}.
\end{cases}
\end{equation}
Here, the Kupradze radiation conditions for $\mathbf{u}-p$ system \eqref{laplaced1} can be similarly 
obtained by decomposing the wave fields as $\mathbf{u}=\mathbf{u}^1+\mathbf{u}^2+\mathbf{u}^3, p=p^1+p^2$; 
see \cite[Section 2]{zhang2022} for the detailed derivation. 

To study the problem \eqref{laplaced1}, we consider an abstract $\mathbf{u}-p$ system
\begin{subequations}
\begin{align}\label{abstractsystem1}
\Delta^*\mathbf{u}+T_1\mathbf{u}
+T_2\nabla p=0 \quad &{\rm in} \; B_{r_0}^c,\\\label{abstractsystem2}
\Delta p+T_3\dive\mathbf{u}+T_4p=0\quad &{\rm in} \;B_{r_0}^c,
\end{align}
\end{subequations}
where $T_i,i=1,2,3,4$ are general complex numbers.
In a block matrix form, the above system \eqref{abstractsystem1}-\eqref{abstractsystem2} is equivalently 
given by
\begin{equation}\label{matrix_form_poro}
B(\mathbf{u}^{\top},p)^{\top}=0\quad {\rm in}\; B_{r_0}^c,\quad{\rm where}\;B=\begin{bmatrix}
\Delta^*+T_1\mat{I}_3 & T_2\nabla \\
T_3\nabla^{\top} & \Delta +T_4\\
\end{bmatrix}.
\end{equation}
In what follows, we shall derive the fundamental solution of poroelastic system \eqref{matrix_form_poro} 
in the free space. A direct calculation shows that 
\begin{equation}
\begin{aligned}
\det(B)&=\mu^2(\la+2\mu)\Big(\Delta+\frac{T_1}{\mu}\Big)^2\Big[\Delta^2+\Big(T_4-\frac{T_2T_3}{\la+2\mu}
+\frac{T_1}{\la+2\mu}\Big)\Delta+\frac{T_1T_4}{\la+2\mu}\Big] \\
&\triangleq\mu^2(\la+2\mu)\Big(\Delta+\frac{T_1}{\mu}\Big)^2(\Delta+k_1^2)(\Delta+k_2^2),
\end{aligned}
\end{equation}
where $k_1$ and $k_2$ satisfy the characteristic equations
\begin{equation}\label{charac1}
k_1^2+k_2^2=T_4-\frac{T_2T_3}{\la+2\mu}+\frac{T_1}{\la+2\mu},\;k_1^2k_2^2=\frac{T_1T_4}{\la+2\mu}.
\end{equation}
The elements of the adjugate matrix $\operatorname{adj}B$ are listed in Appendix \ref{app:aBiot_fs}.
To seek a solution of
\begin{equation}\label{det_B}
\begin{aligned}
\det(B)\psi=\mu^2(\la+2\mu)\Big(\Delta+\frac{T_1}{\mu}\Big)^2(\Delta+k_1^2)(\Delta+k_2^2)\psi =0
\end{aligned}
\end{equation}
with an isolated singularity at the origin. Setting $\mu^2(\la+2\mu)(\Delta+\frac{T_1}{\mu})\psi=\Psi$ 
and $k_3^2=\frac{T_1}{\mu}$, then equation \eqref{det_B} reduces to
\begin{equation*}
\begin{aligned}
&\det(B)\psi=(\Delta+k_1^2)(\Delta+k_2^2)(\Delta+k_3^2)\Psi =0.
\end{aligned}
\end{equation*}
Consider the equation
\begin{equation}\label{fs_delta}
\begin{aligned}
(\Delta+k_1^2)(\Delta+k_2^2)(\Delta+k_3^2)\Psi = -\delta_0,
\end{aligned}
\end{equation}
in a suitable functional space, where $\delta_0$ is the Dirac function at origin.
Taking the Fourier transform on both sides of \eqref{fs_delta} yields
\begin{equation*}
(\lvert \xi \rvert^2-k_1^2)(\lvert \xi \rvert^2-k_2^2)(\lvert \xi \rvert^2-k_3^2)\hat{\Psi} =1.
\end{equation*}
For $\lvert \xi \rvert^2\neq k_1^2,\;k_2^2,\;k_3^2$, it therefore holds
\begin{equation}\label{Psi_ft}
\hat{\Psi} =\frac{A_1}{\lvert \xi \rvert^2-k_1^2}+\frac{A_2}{\lvert \xi \rvert^2-k_2^2}+
\frac{A_3}{\lvert \xi \rvert^2-k_3^2}.
\end{equation}
where the coefficients $A_1,A_2,A_3$ are given by 
\begin{equation}\label{A1A2A3}
A_1=\frac{1}{(k_1^2-k_2^2)(k_1^2-k_3^2)},\;A_2=\frac{1}{(k_2^2-k_1^2)(k_2^2-k_3^2)},
\;A_3=\frac{1}{(k_3^2-k_1^2)(k_3^2-k_2^2)}.
\end{equation}
In order to take the inverse Fourier transform on both sides of \eqref{Psi_ft}, we need to 
compute the integration
\begin{equation}\label{int_ift}
f(x)=\frac{1}{(2\pi)^3}\int_{ \R^3}\frac{e^{{\bf{i}}x\cdot \xi}}{\lvert \xi \rvert^2-k^2}d\xi,\quad \Ip k>0.
\end{equation}
By switching \eqref{int_ift} to spherical coordinates $(r=|\xi|,\theta,\phi)$, we obtain
\begin{equation}\label{integration_ift}
\begin{aligned}
f(x)&=\frac{1}{(2\pi)^2}\int_{0}^{+\infty}\int_{0}^{\pi}\frac{e^{{\bf{i}}\lvert x \rvert r \cos\theta}}
{r^2-k^2}r^2\sin\theta d\theta dr\\
&=\frac{1}{(2\pi)^2}\frac{1}{\lvert x \rvert}\int_{0}^{+\infty}\int_{0}^{\pi}\frac{{\bf{i}}r}{r^2-k^2}
\frac{\pa e^{{\bf{i}}\lvert x \rvert r\cos\theta}}{\pa \theta} d\theta dr\\
&=\frac{1}{(2\pi)^2}\frac{1}{\lvert x \rvert}\int_{0}^{+\infty}\frac{{\bf{i}}r(e^{-{\bf{i}}
\lvert x \rvert r}-e^{{\bf{i}}\lvert x \rvert r})}{r^2-k^2} dr\\
&=\frac{1}{(2\pi)^2}\frac{1}{{\bf{i}}|x|}\int_{-\infty}^{+\infty}\frac{r e^{{\bf{i}}|x|r}}{r^2-k^2}dr,
\end{aligned}
\end{equation}
where we extend the integration to the whole real line in the last step. 

The remaining integral in \eqref{integration_ift}
$$I = \int_{-\infty}^{+\infty}\frac{r e^{{\bf{i}}|x|r}}{r^2-k^2}dr$$
can be computed using the contour integration and residue theorem. The integrand has simple 
poles at $r=\pm k$. Because $\operatorname{Im}k>0$, the pole $r=k$ lies in the upper half-plane 
and $r=-k$ in the lower half-plane. Closing the contour in the upper half-plane 
(where $e^{{\bf{i}}|x|r}$ decays) gives
\begin{equation*}
I = 2\pi {\bf{i}}\;\underset{r=k}{\operatorname{Res}}\frac{r e^{{\bf{i}}|x|r}}{r^2-k^2}
=2\pi {\bf{i}}\lim_{r\to k}(r-k)\frac{r e^{{\bf{i}}|x|r}}{(r+k)(r-k)}={\bf{i}}\pi e^{{\bf{i}} k|x|}.
\end{equation*}
This, combined with \eqref{integration_ift} yields $f(x)=\ds \frac{e^{{\bf{i}}k\lvert x \rvert}}
{4\pi\lvert x \rvert}$. Hene
applying the inverse Fourier transform to \eqref{Psi_ft} yields 
\begin{equation*}
\Psi=A_1\frac{e^{{\bf{i}}k_1\lvert x \rvert}}{4\pi\lvert x \rvert}+A_2\frac{e^{{\bf{i}}k_2
\lvert x \rvert}}{4\pi\lvert x \rvert}+A_3\frac{e^{{\bf{i}}k_3\lvert x \rvert}}{4\pi\lvert x \rvert},
\end{equation*}
where $A_1,A_2,A_3$ are defined in \eqref{A1A2A3}.
Since 
\begin{equation*}
\det(B)\mat{I}_4\psi=B({\rm adj}B)\psi=B({\rm adj}B\psi)=0
\end{equation*}
holds everywhere except at the origin, each column of the matrix $\Phi(x)={\rm adj}B\psi$ satisfies 
the abstract homogeneous poroelastic system \eqref{matrix_form_poro}. Noting that all entries of 
${\rm adj} B$ contain the factor $\Delta+\frac{T_1}{\mu}$, and using the relation $\mu^2(\la+2\mu)
(\Delta+\frac{T_1}{\mu})\psi=\Psi$, the explicit form of the elements of $\Phi(x)$ can therefore be 
obtained (see Appendix \ref{app:aBiot_fs}). To this end, we define an associated operator of $B$ by 
\begin{equation*}
B^*=\begin{bmatrix}
\Delta^*+T_1\mat{I}_3 & -T_3\nabla \\
-T_2\nabla^{\top} & \Delta +T_4\\
\end{bmatrix}.
\end{equation*}
Following the same procedure to deduce the fundamental solution $\Phi$, one can obtain the fundamental 
solution $\Phi^*$ of the operator $B^*$. Now an important Green's identity will be proved in the following lemma.	
\begin{lemma}
For regular vectors $U=(\mathbf{u}^{\top},p)^{\top}$ and $V=(\mathbf{u}'^{\top},p')^{\top}$ defined in a 
bounded domain $D$ with the unit outward normal $\bs{n}$ to the smooth boundary $\pa D$, the following 
second Green’s identity holds
\begin{equation}\label{secondGreen}
\int_{D}(U^{\top}B^*V-V^{\top}BU)dx=\int_{\pa D}(U^{\top}R^*V-V^{\top}RU)ds(x),
\end{equation}
where 
\begin{equation}\label{R*R}
R^*=\begin{bmatrix}
\mathcal{T}\mathbb{I}_3 & -T_3\bs{n} \\
0 & \ds\frac{\pa}{\pa \bs{n}}\\
\end{bmatrix},\quad R=\begin{bmatrix}
\mathcal{T}\mathbb{I}_3 & T_2\bs{n} \\
0 & \ds\frac{\pa}{\pa \bs{n}}\\
\end{bmatrix},
\end{equation}
and the surface traction operator $\mathcal{T}$ is defined by $\mathcal{T}\mathbf{u}
=\mu\pa_{\bs{n}}\mathbf{u}+(\la+\mu)(\dive\mathbf{u})\bs{n}\;{\rm on}\;\pa D$.
\begin{proof}
A direct calculation gives
\begin{align}\no
&U^{\top}B^*V-V^{\top}BU\\ \no
=\;&\mathbf{u}\cdot(\Delta^*\mathbf{u}'-T_3\nabla p'+T_1\mathbf{u}')+p(\Delta p' +T_4 p'
-T_2 \dive \mathbf{u}') \\ \label{diffeq}
&-\mathbf{u}'\cdot(\Delta^*\mathbf{u}+T_1\mathbf{u}
+T_2\nabla p)-p'(\Delta p+T_3\dive\mathbf{u}+T_4p)\\ \no
=\;&(\mathbf{u}\cdot\Delta^*\mathbf{u}'-\mathbf{u}'\cdot\Delta^*\mathbf{u})+(p\Delta p'-p'\Delta p)
-T_2(\mathbf{u}'\cdot\nabla p+ p \dive \mathbf{u}')-T_3(\mathbf{u}\cdot\nabla p'+p'\dive\mathbf{u})\\ \no
=\;&(\mathbf{u}\cdot\Delta^*\mathbf{u}'-\mathbf{u}'\cdot\Delta^*\mathbf{u})+(p\Delta p'-p'\Delta p)-T_2 
\dive (p\mathbf{u}')-T_3\dive(p'\mathbf{u}).
\end{align}
Integrating the differential identity \eqref{diffeq} over $D$ and applying the divergence theorem 
yields the integral identity
\begin{align}\no
&\int_{D}(U^{\top}B^*V-V^{\top}BU)dx\\ \no
=&\int_{\pa D}(\mathbf{u}\cdot\mathcal{T}\mathbf{u}'-\mathbf{u}'\cdot\mathcal{T}\mathbf{u})ds
+\int_{\pa D}\Big(p\frac{ \pa p'}{\pa \bs{n}}-p'\frac{ \pa p}{\pa \bs{n}}\Big)ds-\int_{\pa D}
(T_2 p\bs{n})\cdot\mathbf{u}'ds-\int_{\pa D}(T_3p'\bs{n})\cdot\mathbf{u}ds\\ \label{inteq}
=&\int_{\pa D} \Big[\mathbf{u}\cdot\mathcal{T}\mathbf{u}'+p\frac{ \pa p'}{\pa \bs{n}}-(T_3p'\bs{n})
\cdot\mathbf{u}\Big]ds-\int_{\pa D}\Big[\mathbf{u}'\cdot\mathcal{T}\mathbf{u}+p'\frac{ \pa p}{\pa\bs{n} }
+(T_2 p\bs{n})\cdot\mathbf{u}'\Big]ds. 
\end{align}
With the definitions of $R^*$ and $R$ in \eqref{R*R}, equation \eqref{inteq} can be rearranged 
into the desired Green's identity \eqref{secondGreen}.	
\end{proof}
\end{lemma}
Assume the system \eqref{laplaced1} admits a unique weak solution $(\mathbf{u}^{\top},p)^{\top}$ in 
the product space $H_{loc}^1(\R^3\ba\overline{B_{r_0}})^3\times H_{loc}^2(\R^3\ba\overline{B_{r_0}})$. 
The solution of \eqref{laplaced1} yields the boundary traction  $\mathcal{T}\mathbf{u}\in { H^{-1/2}( S_{r_0})}$
and the trace $p\in { H^{3/2}( S_{r_0})}$.
It follows from the Green’s identity \eqref{secondGreen} with the parameters in 
\eqref{abstractsystem1}-\eqref{abstractsystem2} set to
\begin{equation}\label{T1234}
\begin{aligned} 
&T_1=\omega^2 \rho -\frac{\omega^3 \rho_f^2}{{\omega m+\bf{i}}b},\;
T_2=\frac{\omega\rho_f}{{\omega m+\bf{i}}b}-\alpha,\\
&T_3=-\omega^2\rho_f+{\bf{i}}\alpha\omega b+\alpha\omega^2m,\;
T_4=\frac{\omega^2m+{\bf{i}}\omega b}{M},
\end{aligned}
\end{equation}
that the solution $U=(\mathbf{u}^{\top},p)^{\top}$ of \eqref{laplaced1} has the integral representation
\begin{equation}\label{integrationrlaplaced}
\begin{aligned}
U(y)&=\int_{S_{r_0}}\Big[[R^*{\Phi}^*(x-y)]^{\top}U(x)-{\Phi}^*(x-y)^{\top}U'(x)\Big]ds(x),\;y\in B_{r_0}^c
\end{aligned}
\end{equation}
where $U|_{S_{r_0}}\in H^{1/2}(S_{r_0})^3\times H^{3/2}(S_{r_0})$ and
\begin{align*}
U'=\begin{bmatrix}
\mathcal{T}\mathbf{u}+\Big(\ds\frac{\omega\rho_f}{{\omega m+\bf{i}}b}-\alpha\Big)p\bs{n}  \\
\ds\frac{\pa p }{\pa \bs{n}} \\
\end{bmatrix}\in H^{-1/2}(S_{r_0})^3\times H^{1/2}(S_{r_0}).
\end{align*}
For given $\bs{f}=(\bs{f}_s^{\top},f_4)^{\top}\in H^{\frac{1}{2}}(S_{r_0})^4$ with 
$\bs{f}_s=(f_1,f_2,f_3)^{\top}\in H^{\frac{1}{2}}(S_{r_0})^3$, we denote by $E(\bs{f})
=[E_1(\bs{f})^{\top},E_2(\bs{f})^{\top}]^{\top}$ the solution of the $\mathbf{u}-\mathbf{w}$ 
system \eqref{eDpp1}, i.e.
\begin{equation}\label{E1E2}
E_1(\bs{f})=\mathbf{u},\;E_2(\bs{f}) = \frac{1}{{\bf i}\omega b+\omega^2m}\nabla p-\frac{\omega\rho_f}
{\omega m+{\bf i}b}\mathbf{u},
\end{equation}
where $U=(\mathbf{u}^{\top},p)^{\top}$ satisfies the integral representation \eqref{integrationrlaplaced}.

Denote $\bs{D}(x-y)=R^*{\Phi}^*(x-y)$ and $\bs{S}(x-y)={\Phi}^*(x-y)$. For $k=1,2,3$, let $M_{(k)}$ 
and $M^{(k)}$ be the $k$-th row vector and column vector, respectively, of matrix $M$ over $\mat{C}$. 
It holds from \eqref{integrationrlaplaced}-\eqref{E1E2} that
\begin{equation}\label{E1E2_element}
\begin{aligned}
E_1(\bs{f})(y)_{(k)}=&\int_{S_{r_0}}[\bs{D}(x-y)^{(k)}\cdot U(x)-\bs{S}(x-y)^{(k)}\cdot U'(x)]ds(x), \\
E_2(\bs{f})(y)_{(k)}=&\int_{S_{r_0}}[\bs{D_w}(x-y)^{(k)}\cdot U(x)-\bs{S_w}(x-y)^{(k)}\cdot U'(x)]ds(x),
\end{aligned}
\end{equation}
where
\begin{equation*}
\begin{aligned}
\bs{D_w}(x-y)^{(k)}&=\frac{1}{{\bf i}\omega b+\omega^2m}\frac{\pa\bs{D}(x-y)^{(4)}}{\pa y_{k}}
-\frac{\omega\rho_f}{\omega m+{\bf i}b}\bs{D}(x-y)^{(k)},\\
\bs{S_w}(x-y)^{(k)}
&=\frac{1}{{\bf i}\omega b+\omega^2m}\frac{\pa\bs{S}(x-y)^{(4)}}{\pa y_{k}}-\frac{\omega\rho_f}
{\omega m+{\bf i}b}\bs{S}(x-y)^{(k)}. 
\end{aligned}
\end{equation*}
All elements of the matrices $\bs{D}(x-y)$, $\bs{S}(x-y)$, $\bs{D_w}(x-y)$ and $\bs{S_w}(x-y)$ 
are of the form $\ds\frac{e^{{\bf i}k|x-y|}}{|x-y|^m}$ ($m\in\N^*$) multiplied by a multivariate 
polynomial in the variables
$$\begin{aligned}
\left\{ |x-y|,y_1,y_2,y_3,x_1,x_2,x_3,
\frac{x_1}{|x|},\frac{x_2}{|x|},\frac{x_3}{|x|} \right\}.
\end{aligned}$$
Now we can define the Dirichlet to Neumann (DtN) map 
$\mathcal{N}:H^{1/2}(S_{r_0})^4\to H^{-1/2}(S_{r_0})^4$ for problem \eqref{eDpp1} by 
\begin{equation}\label{porodtn}
\mathcal{N}\bs(\bs{f}_s^{\top},f_4)^{\top}=\begin{bmatrix}
\mathcal{T}_c & \alpha M {\bs n} \nabla^{\top} \\
\alpha M \nabla^{\top} &  M \nabla^{\top}
\end{bmatrix} \begin{bmatrix}
\mathbf{u}  \\
\mathbf{w} 
\end{bmatrix} \quad{\rm on}\;S_{r_0}.
\end{equation}	
The  well-posedness of \eqref{eDpp1} implies that $\mathcal{N}$ is a continuous linear operator.
Then the scattering problem \eqref{sporoelastic1}-\eqref{sDBC2} with the Kupradze radiation condition
\eqref{Kupradze} can be equivalently reduced to the boundary-value problem in a bounded domain 
$\Om_0 = B_{r_0} \ba\ov{\Om}$
\begin{subequations}
\begin{align}\label{sporoelastic1_1}
\omega^2(\rho\mathbf{u} +\rho_f \mathbf{w})+\Delta_c^*\mathbf{u}+\alpha M \nabla \nabla \cdot
\mathbf{w}={\bf Q} \quad &{\rm in} \quad \Om_0,\\\label{sporoelastic2_1}
\omega^2(\rho_f \mathbf{u}+m \mathbf{w})+{\bf{i}}\omega b \mathbf{w}+\nabla \nabla \cdot 
(\alpha M \mathbf{u}+ M \mathbf{w})=0\quad &{\rm in} \quad \Om_0,\\\label{sDBC1_1}
(\mathbf{u}^{\top},\bs{n}\cdot \mathbf{w})^{\top}=0 \quad&{\rm on}\quad \pa \Om,\\ \label{sDBC2_1}
\begin{bmatrix}
\mathcal{T}_c & \alpha M {\bs n} \nabla^{\top} \\
\alpha M \nabla^{\top} &  M \nabla^{\top}
\end{bmatrix}\begin{bmatrix}
\mathbf{u}  \\
\mathbf{w} 
\end{bmatrix}=\mathcal{N}(\mathbf{u}^{\top},\bs{n}\cdot \mathbf{w})^{\top} \quad&{\rm on}\quad S_{r_0}.
\end{align}
\end{subequations}
The following theorem on the positivity of the wave numbers is crucial for the subsequent analysis. 
It guarantees that the fundamental solution decays exponentially in the PML layer, which is the key 
ingredient for proving the exponential convergence of the PML method in subsection \ref{subsec:convergence}.
\begin{theorem}\label{wavenumber}
The wave numbers $k_1$, $k_2$ and $k_3$ of the system \eqref{laplaced1} satisfy the following characteristic relations:
\begin{equation}\label{k1k2}
\begin{aligned}
k_1^2+k_2^2
=\;&\frac{1}{\la+2\mu}\left(\omega^2\rho -\frac{\omega^4\rho_f^2 m}{\omega^2m^2+b^2}
+\frac{\omega^3\rho_f^2 b}{\omega^2m^2+b^2}{\bf{i}}\right)+\frac{\omega^2m+{\bf{i}}\omega b}{M}\\ 
&+\frac{1}{\la+2\mu}\frac{(\rho_f\omega^2m-\alpha \omega^2m^2-\alpha b^2-\rho_f\omega b {\bf{i}})
(\omega^2\rho_f -\alpha \omega^2m-\alpha \omega b{\bf{i}})}{\omega^2m^2+b^2}\\
k_1^2k_2^2
=\;&\frac{1}{\la+2\mu}\frac{\omega^4(\rho m-\rho_f^2)+\omega^3\rho b {\bf{i}}}{M},\\
k_3^2 =\;& \frac{1}{\mu}\left(\omega^2 \rho -\frac{\omega^3 \rho_f^2}{{\omega m+\bf{i}}b}\right) 
(\Ip k_3^2>0 \Longrightarrow \Ip k_3>0).
\end{aligned}
\end{equation}
Moreover, if $\omega$ is sufficiently small and the parameters satisfy
\begin{equation}\label{wavenumbercondition}
\frac{\rho}{\la+2\mu+\alpha^2M}\Big(\frac{\rho+\alpha^2m-2\alpha \rho_f}{\la+2\mu}-\frac{\rho}
{\la+2\mu+\alpha^2M}+\frac{m}{M}\Big)-\frac{\rho m-\rho_f^2}{M(\la+2\mu)}>0,
\end{equation}
then $\Rp k_j>0$ and $\Ip k_j>0$ for $j=1,2$.
\begin{proof}
Let $k_1^2+k_2^2=A_1(\omega)$ and $k_1^2k_2^2=A_2(\omega)$, and write
$A_1(\omega)=\omega^2f_1(\omega)+\omega g_1(\omega){\bf{i}}$, $A_2(\omega)
=\omega^4f_2(\omega)+\omega^3 g_2(\omega){\bf{i}}$, where
\begin{equation}\label{fg}
\begin{aligned}
f_1(\omega)=&\frac{1}{\la+2\mu}\Big(\rho -\frac{\omega^2\rho_f^2 m}{\omega^2m^2+b^2}\Big)
+\frac{1}{\la+2\mu}\frac{(\rho_f\omega^2m-\alpha \omega^2m^2-\alpha b^2)(\rho_f -\alpha m)}{\omega^2m^2+b^2}\\
&-\frac{1}{\la+2\mu}\frac{\rho_f\alpha b^2}{\omega^2m^2+b^2} + \frac{m}{M},\\
g_1(w)=&\frac{1}{\la+2\mu}\frac{\omega^2\rho_f^2 b}{\omega^2m^2+b^2}+\frac{ b}{M}
-\frac{1}{\la+2\mu}\frac{\rho_f b(\omega^2\rho_f -\alpha \omega^2m)}{\omega^2m^2+b^2}\\
&-\frac{1}{\la+2\mu}\frac{\alpha b(\rho_f\omega^2m-\alpha \omega^2m^2-\alpha b^2)}{\omega^2m^2+b^2},\\
f_2(\omega)=&\frac{1}{\la+2\mu}\frac{\rho m-\rho_f^2}{M},\;
g_2(w)=\frac{1}{\la+2\mu}\frac{\rho b }{M}.
\end{aligned}
\end{equation}
One readily verifies that
\begin{equation}\label{lim_f1g1}
\begin{aligned}
\ds\lim_{\omega \to 0}f_1(\omega)&=\frac{\rho+\alpha^2m-2\alpha \rho_f}{\la+2\mu} + \frac{m}{M},\\
\ds\lim_{\omega \to 0}g_1(w)&=\frac{ b}{M}
+\frac{\alpha^2 b}{\la+2\mu}>0.
\end{aligned}
\end{equation}
By virtue of Vieta's theorem, $k_1^2$ and $k_2^2$ are the two roots of the quadratic equation
\begin{equation}\label{Vieta}
t^2-A_1(\omega)t+A_2(\omega)=0.
\end{equation}
Using the quadratic formula, the roots of equation \eqref{Vieta} are given by:
\begin{equation*}
k_{1,2}^2=\frac{A_1(\omega)\pm[A_1^2(\omega)-4A_2(\omega)]^{\frac{1}{2}}}{2}.
\end{equation*}
Let $A_1^2(\omega)-4A_2(\omega)=\omega^2A(\omega)+\omega^3B(\omega){\bf{i}}$, where
\begin{equation}\label{AB}
A(\omega)=\omega^2f_1^2(\omega)-g_1^2(\omega)-4\omega^2f_2(\omega),\;
B(\omega)=2f_1(\omega)g_1(\omega)-4g_2(\omega).
\end{equation}
This, together with \eqref{z1/2} gives the principal square root of $A_1^2(\omega)-4A_2(\omega)$
\begin{equation}
[A_1^2(\omega)-4A_2(\omega)]^{\frac{1}{2}} = s(\om) + {\bf i}\frac{\om^3B(\om)}{2s(\om)},
\end{equation}
where $s(\om)$ is defined as
\begin{equation*}
s(\om)=\left(\frac{\omega^2A(\omega)+\left[\omega^4A^2(\omega)+\omega^6B^2(\omega)\right]
^{\frac{1}{2}}}{2}\right)^{\frac{1}{2}}.
\end{equation*}
Consequently, the imaginary part of $k_{1,2}^2$ is given by 
$$\Ip k_{1,2}^2 = \ds\frac{\om}{2}\left[g_1(\om)\pm\frac{\om^2B(\om)}{2s(\om)}\right].$$
Both imaginary parts of $k_1^2$ and $k_2^2$ are strictly positive if and only if 
\begin{equation}\label{equav_rela}
g_1(\omega) > \left| \ds\frac{\omega^2 B(\omega)}{2s(\omega)} \right| \Longleftrightarrow  
2g_1^2(\omega) + A(\omega) > [A^2(\omega) + \omega^2 B^2(\omega)]^{\frac{1}{2}}.
\end{equation}
It also follows obviously from \eqref{lim_f1g1} that
\begin{equation*}
\ds\lim_{\omega \to 0}A(\omega)=-\lim_{\omega \to 0}g_1^2(\om)=-\Big(\frac{ b}{M}
+\frac{\alpha^2 b}{\la+2\mu}\Big) ^2 < 0.
\end{equation*}
Noting that both sides of \eqref{equav_rela} are positive for small $\omega$, we substitute 
the expressions of $A$ and $B$ (see \eqref{AB}) to obtain
\begin{equation}\label{square}
\bigl(2g_1^2 + \omega^2 f_1^2 - g_1^2 - 4\omega^2 f_2\bigr)^2
> (\omega^2 f_1^2 - g_1^2 - 4\omega^2 f_2)^2 + \omega^2 (2f_1 g_1 - 4g_2)^2.
\end{equation}
A straightforward expansion and simplification of \eqref{square} leads to
\begin{equation}\label{square1}
g_1^2(f_1^2 - 4f_2) > (f_1 g_1 - 2g_2)^2 \Longleftrightarrow f_1 g_1 g_2 - g_1^2 f_2 - g_2^2 > 0.
\end{equation}
Taking the limit $\omega \to 0$ on \eqref{square1} and using \eqref{fg}-\eqref{lim_f1g1} yields 
the desired positivity criterion \eqref{wavenumbercondition}.  The positivity of the imaginary 
parts of $k_{1,2}^2$ guarantees  $\operatorname{Re} k_{1,2} > 0$ and $\operatorname{Im} k_{1,2} > 0$.
\end{proof}

\end{theorem}
\begin{remark}[A sufficient condition for \eqref{wavenumbercondition}]
{\rm We now give a sufficient condition for \eqref{wavenumbercondition} to hold. Observe that
\begin{equation}
\begin{aligned}
&\frac{\rho}{\la+2\mu+\alpha^2M}\left(\frac{\rho+\alpha^2m-2\alpha \rho_f}{\la+2\mu}-\frac{\rho}
{\la+2\mu+\alpha^2M}+\frac{m}{M}\right)-\frac{\rho m-\rho_f^2}{M(\la+2\mu)}\\ 
=&\frac{1}{M}\left[\frac{\rho^2+\alpha^2m\rho-2\alpha\rho \rho_f}{\alpha^2(\la+2\mu)
+\ds\frac{(\la+2\mu)^2}{M}}-\frac{\alpha^2(\rho m-\rho_f^2)}{\alpha^2(\la+2\mu)}+m
-\frac{\rho^2M}{(\la+2\mu+\alpha^2M)^2}\right].
\end{aligned}
\end{equation}
Since
\begin{equation*}
\begin{aligned}
& \ds\lim_{M\to \infty}\left[\frac{\rho^2+\alpha^2m\rho-2\alpha\rho \rho_f}{\alpha^2(\la+2\mu)
+\ds\frac{(\la+2\mu)^2}{M}}-\frac{\alpha^2(\rho m-\rho_f^2)}{\alpha^2(\la+2\mu)}+m-\frac{\rho^2M}
{(\la+2\mu+\alpha^2M)^2}\right]\\
=&\frac{\rho^2+\alpha^2m\rho-2\alpha\rho \rho_f}{\alpha^2(\la+2\mu)}-\frac{\alpha^2(\rho m-\rho_f^2)}
{\alpha^2(\la+2\mu)}+m=\frac{(\rho-\alpha\rho_f)^2}{\alpha^2(\la+2\mu)}+m>0,
\end{aligned}
\end{equation*}
we conclude that the condition \eqref{wavenumbercondition} is satisfied whenever $M$ is sufficiently large.}
\end{remark}
To conclude this section, we derive a variational formulation for the problem
\eqref{sporoelastic1_1}-\eqref{sDBC2_1}. We start by introducing a functional space on $\Om_0$:
\begin{equation*}
V_{\pa\Om}(\Om_0) = \left\{
(\bs{v}_1^\top, \bs{v}_2^\top)^\top \in H^1(\Om_0)^6 \;\middle|\;
\begin{aligned}
&\bs{v}_2 = \frac{1}{\omega^2 m+{\bf i}\omega b} \nabla q - \frac{\omega\rho_f}{\omega m+{\bf i}b} \bs{v}_1, 
\; q \in H^2(\Om_0), \\
&\bigl(\bs{v}_1^\top,\; \bs{n} \cdot \bs{v}_2\bigr)^\top = 0 \quad \text{on } \pa\Om,
\end{aligned}
\right\}.
\end{equation*}
Let $\Psi=\begin{bmatrix}
\Psi^s   \\
\Psi^f 
\end{bmatrix}\in V_{\pa\Om}(\Om_0)$ be an arbitrary test function. Multiplying \eqref{sporoelastic1} 
by $\overline{\Psi^s}$ and integrating
over $\Om_0$ gives
\begin{equation*}
\int_{\Om_0} \bigl[\,\omega^2(\rho\mathbf{u}+\rho_f\mathbf{w})
+ \Delta_c^*\mathbf{u} + \alpha M\nabla\nabla\cdot\mathbf{w}\,\bigr]\cdot\overline{\Psi^s}\,dx
= \int_{\Om_0} {\bf Q}\cdot\overline{\Psi^s}\,dx .
\end{equation*}
Integrating by parts, using the symmetry of $\sig_c(\mathbf{u})$
and the boundary conditions $\Psi^s = 0$ on $\pa\Om$, we obtain
\begin{equation*}
\begin{aligned}
\int_{\Om_0} \Delta_c^*\mathbf{u}\cdot\overline{\Psi^s}\,dx
&= -\int_{\Om_0} \bigl[\,\mu\nabla\mathbf{u}:\nabla\overline{\Psi^s}
+ (\la_c+\mu)(\nabla\cdot\mathbf{u})(\nabla\cdot\overline{\Psi^s})\,\bigr]\,dx
+ \int_{S_{r_0}} \mathcal{T}_c\mathbf{u}\cdot\overline{\Psi^s}\,ds, \\
\int_{\Om_0} \alpha M\nabla(\nabla\cdot\mathbf{w})\cdot\overline{\Psi^s}\,dx
&= -\int_{\Om_0} \alpha M(\nabla\cdot\mathbf{w})(\nabla\cdot\overline{\Psi^s})\,dx
+ \int_{S_{r_0}} \alpha M(\nabla\cdot\mathbf{w})\,(\bs{n}\cdot\overline{\Psi^s})\,ds.
\end{aligned}
\end{equation*}
Multiplying \eqref{sporoelastic2} by $\overline{\Psi^f}$ and integrating
over $\Om_0$ yields
\begin{equation*}
\int_{\Om_0} \bigl[\,\omega^2(\rho_f\mathbf{u}+m\mathbf{w})
+ {\bf i}\omega b\mathbf{w}
+ \nabla\nabla\cdot(\alpha M\mathbf{u}+M\mathbf{w})\,\bigr]\cdot\overline{\Psi^f}\,dx = 0 .
\end{equation*}
Integrating by parts, using $\bs{n}\cdot\Psi^f = 0$ on $\pa\Om$, we obtain
\begin{equation*}
\begin{aligned}
\int_{\Om_0} \nabla\nabla\cdot(\alpha M\mathbf{u}+M\mathbf{w})\cdot\overline{\Psi^f}\,dx
&= -\int_{\Om_0} \bigl[\,\alpha M(\nabla\cdot\mathbf{u})
+ M(\nabla\cdot\mathbf{w})\,\bigr](\nabla\cdot\overline{\Psi^f})\,dx \\
&\quad + \int_{S_{r_0}} \bigl[\,\alpha M(\nabla\cdot\mathbf{u})
+ M(\nabla\cdot\mathbf{w})\,\bigr](\bs{n}\cdot\overline{\Psi^f})\,ds .
\end{aligned}
\end{equation*}
Adding the resulting identities, we obtain the variational formulation: to find a solution
 $\Phi=(\mathbf{u}^{\top},\mathbf{w}^{\top})^{\top}\in V_{\pa\Om}(\Om_0)$ such that
\begin{equation}\label{scattering0}
\mathcal{B}(\Phi,\Psi)=-\int_{\Om_0}{\bf Q}\cdot \overline{\Psi^s}dx,\quad \forall\;\Psi=\begin{bmatrix}
\Psi^s   \\
\Psi^f 
\end{bmatrix} \in V_{\pa \Om}(\Om_0)
\end{equation}
where $\mathcal{B}(\Phi,\Psi)=\mathcal{B}_{\Om_0}(\Phi,\Psi)-\langle \mathcal{N}\Phi,\Psi_{\bs{n}}
\rangle_{S_{r_0}}$ with $\Psi_{\bs{n}}=\begin{bmatrix}
\Psi^s   \\
\bs{n}\cdot \Psi^f 
\end{bmatrix}$.
Here, the symmetric sesquilinear form $\mathcal{B}_{\Om_0}(\Phi,\Psi)$ is defined as
\begin{equation}\label{B_Om0}
\begin{aligned}
\mathcal{B}_{\Om_0}(\Phi,\Psi)=&\int_{\Om_0}\Big[\mu\nabla\mathbf{u}:\nabla\overline{\Psi^s} 
+(\la_c+\mu)(\nabla\cdot \mathbf{u})(\nabla\cdot\overline{\Psi^s})
\\
&+ M(\nabla\cdot \mathbf{w})(\nabla\cdot\overline{\Psi^f})+\alpha M(\nabla\cdot \mathbf{w})
(\nabla\cdot\overline{\Psi^s})+\alpha M(\nabla\cdot\mathbf{u})(\nabla\cdot\overline{\Psi^f})
\\
&-\omega^2(\rho\mathbf{u}\cdot\overline{\Psi^s} +\rho_f \mathbf{w}\cdot\overline{\Psi^s}
+\rho_f\mathbf{u}\cdot\overline{\Psi^f} +m\mathbf{w}\cdot\overline{\Psi^f})
-{\bf{i}}\omega b\mathbf{w}\cdot\overline{\Psi^f} \Big]dx,
\end{aligned}
\end{equation}
and $\langle\cdot,\cdot\rangle$ denotes the dual pair between the spaces $H^{-1/2}(S_{r_0})^4$ 
and $H^{1/2}(S_{r_0})^4$.

Assume in the following that the variational problem \eqref{scattering0} admits a unique solution. 
Then from \cite[Theorem 3.2]{Bramble2007}, we know that there exists a constant $C>0$ such that 
the inf-sup condition holds:
\begin{equation}\label{infsup_B1}
\sup_{\Psi\in { V_{\pa \Om}(\Om_0)}\ba\{0\}} \frac{\mathcal{B}(\Phi,\Psi)}
{\Vert \Psi\Vert_{H^{1}(\Om_0)^6}}\geq C\Vert \Phi\Vert_{H^{1}(\Om_0)^6},\quad \forall\;\Phi\in  V_{\pa \Om}(\Om_0).
\end{equation}
\begin{remark}[Constant Convention]
{\rm Here and in the sequel, the symbol $C$ denotes a generic positive constant which is allowed to 
change from one occurrence to the next.}
\end{remark}

\section{The spherical PML method}\label{sec:PML}
In this section, we shall derive the PML formulation for the poroelastic problem 
\eqref{sporoelastic1_1}-\eqref{sDBC2_1}. The well-posedness and stability of the PML problem 
in truncated domain and layer can be established under certain constraints on poroelastic parameters. 
Finally, we prove the exponential convergence of the PML method based on the PML extension 
and the exponential decay properties of the stretched fundamental solution.
\subsection{PML formulation}\label{subsec:PML form}
For given $r_2>r_0$, let $B_{r_2}$ be a spherical domain surrounding $B_{r_0}$.
The truncated PML domain and the PML layer are denoted by $\Om_2:=B_{r_2} \ba \overline{\Om}$ 
and $\Om_{\PML}:=B_{r_2} \ba \ov{B_{r_0}}$, respectively. 
For $x=(x_1,x_2,x_3)^{\top}\in\R^3$ consider the spherical coordinates
$x_1=r\sin\theta\cos\phi,\;x_2=r\sin\theta\sin\phi,\;
x_3=r\cos\theta$
with $r=|x|$ and Euler angle $(\theta,\phi)$.
Let $s(r)=1+{\bf i}\sig(r)$ be the model medium property which satisfies
\begin{equation*}
\sig\in C(\R^+),\;\sig\geq 0,\;{\rm and}\;\sig=0\;\;{\rm for}\;r\leq r_0.
\end{equation*}
Denote by $\wid{r}$ the complex radius defined as
\begin{equation}\label{stretched_r}
\wid{r}(r)=\int_0^{r}s(\tau)d\tau \triangleq r\beta(r) = r[1+{\bf i}\sig_m(r)],
\end{equation}
where $\sig_m(r)=\frac{1}{r}\int_0^r\sigma(\tau)d\tau$. For the Cartesian coordinates 
$x=(x_1,x_2,x_3)^{\top}$, the corresponding change of variables is $\wid{x}=(\wid{x}_1,\wid{x}_2,
\wid{x}_3)^{\top}$ with
\begin{equation}\label{stretched_x}
\wid{x}_1=\wid{r}\sin\theta\cos\phi,\;\wid{x}_2=\wid{r}\sin\theta\sin\phi,\;
\wid{x}_3=\wid{r}\cos\theta.
\end{equation}
Following \cite{BPT2010}, we impose the constraint conditions on $\sig_m\in C^2(\R^+)$
\begin{equation}\label{pmlcondition}
\begin{aligned}
&\sig_m(r)=0 \quad{\rm for}\; r \leq r_0,\\
&\sig_m(r)=\sig_0 \quad{\rm for}\; r \geq r_1,\\
&\sig_m(r)\; \text{is monotone  increasing for $r\in (r_0,r_1)$},
\end{aligned}
\end{equation}
where $\sig_0>0$ and $r_1$ are given constants satisfying
\begin{equation}\label{r1condition}
r_0 + \frac{3}{4}d < r_1 < r_2,\;d=r_2-r_0\;\text{denotes the thickness of PML layer}.
\end{equation}
The lower bound on $r_1$ guarantees that the cut‑off function used in the convergence analysis
 (see the proof of Lemma \ref{poroelasticPMLpotential}) is well defined.

From \eqref{stretched_r}-\eqref{pmlcondition}, one easily verifies that $\sig_m\leq\sig$ for 
all $r\geq r_0$. In the following text, we take $\sig_m(r)$ in the specific form
\begin{equation}\label{sigma_m}
\sig_m(r)=
\begin{cases}
0, & r \leq r_0,\\
\ds\sig_0\frac{h\Big(\frac{r-r_0}{r_1-r_0}\Big)}{h\Big(\frac{r-r_0}{r_1-r_0}\Big)
+kh\Big(\frac{r_1-r}{r_1-r_0}\Big)},\quad& r_0< r < r_1, \\
\sig_0,&r \geq r_1,
\end{cases}
\end{equation}
where $h(\tau)=\left\{
\begin{aligned}
&e^{-\frac{1}{\tau}},\;  &\tau>0\\
& 0,&\tau\leq 0
\end{aligned}
\right.$ and $0<k<1$.

Let $\{\bs e_r,\bs e_{\theta},\bs e_{\phi}\}$ be the unit vectors of the spherical coordinates
\begin{equation*}
\begin{aligned}
\bs{e}_{r} &=(\sin \theta \cos \phi, \sin \theta \sin \phi, \cos \theta)^\top, \\ 
\bs{e}_{\theta} &=(\cos \theta \cos \phi, \cos \theta \sin \phi,-\sin \theta)^\top, \\ 
\bs{e}_{\phi} &=(-\sin \phi, \cos \phi, 0)^\top.
\end{aligned}
\end{equation*}
For any scalar $f$ and vector $\mathbf{u}=(u_1,u_2,u_3)^{\top}$, the stretched differential 
operators in spherical coordinates are defined as follows
\begin{equation*}
\begin{aligned}
\wid{\nabla}f &=\frac{\pa f}{\pa \wid{r} }\bs{e}_r+\frac{1}{\wid{r}}\frac{\pa f}{\pa \theta }
\bs{e}_{\theta}+\frac{1}{\wid{r}\sin \theta}\frac{\pa f}{\pa \phi }\bs{e}_{\phi},\\
\wid{\dive}\mathbf{u} &=\wid{\nabla}\cdot\mathbf{u}=\frac{1}{\wid{r}^2}\frac{\pa(\wid{r}^2 u_r)}
{\pa \wid{r} }+\frac{1}{\wid{r}\sin\theta}\frac{\pa(\sin\theta u_{\theta})}{\pa \theta }
+\frac{1}{\wid{r}\sin \theta}\frac{\pa u_{\phi}}{\pa \phi },\\
\wid{\nabla}\times\mathbf{u} &=\frac{1}{ \wid{r}\sin\theta }\left[\frac{\pa(\sin\theta u_{\phi})}
{\pa \theta}-\frac{\pa u_{\theta}}{\pa \phi}\right]\bs{e}_r+\frac{1}{ \wid{r} }\left[\frac{1}{\sin\theta}
\frac{\pa u_{r}}{\pa \phi}-\frac{\pa( \wid{r}u_{\phi})}{\pa \wid{r}}\right]\bs{e}_{\theta}\\
&\quad+\frac{1}{ \wid{r} }\left[\frac{\pa( \wid{r}u_{\theta})}{\pa \wid{r}}-\frac{\pa u_{r}}
{\pa \theta}\right]\bs{e}_{\phi},
\end{aligned}
\end{equation*}
with $u_r=\mathbf{u}\cdot\bs e_r$, $u_{\theta}=\mathbf{u}\cdot\bs e_{\theta}$ and 
$u_{\phi}=\mathbf{u}\cdot\bs e_{\phi}$.
It is easy to verify that
\begin{equation}\label{relation_oper}
\wid{\nabla}f=P_1^{-1}\nabla f,\;\wid{\dive}\mathbf{u}=\frac{1}{\beta^2s}
\dive (P_2 \mathbf{u}),\;\wid{\nabla}\times\mathbf{u}=P_2^{-1}\nabla \times P_1\mathbf{u},
\end{equation}
where $P_1$ and $P_2$ are defined by
\begin{equation}\label{P1P2}
P_1=\Diag\{s,\beta,\beta\},\quad P_2=\Diag\{\beta^2,s\beta,s\beta\}.
\end{equation}
Moreover, it follows from a complex variable analyticity argument \cite{BPT2010} that
\begin{equation}\label{stretched_laplace}
\wid{\Delta}\mathbf{u}=\sum_{i=1}^3(\wid{\Delta}u_i)\bs{e}_i
=\sum_{i=1}^3\wid{\dive}(\wid{\nabla}u_i)\bs{e}_i=- \wid{\nabla}\times\wid{\nabla}\times u
+\wid{\nabla} \wid{\dive}\mathbf{u}.
\end{equation}
Here $u_i$ denotes the $i$'th component of $\mathbf{u}$ and $\bs{e}_i$ denotes the unit vector 
in the $i$'th direction. We also introduce the stretched modified Lam\'{e} operator 
$\wid{\De_c^*}$ defined by
\begin{equation*}
\wid{\De_c^*}\mathbf{u}=\mu \wid{\De} \mathbf{u}+(\la_c +\mu)\wid{\na} \wid{\na} \cdot \mathbf{u}.
\end{equation*}

For the complex stretched coordinates $\wid{x}=(\wid{x}_1,\wid{x}_2,\wid{x}_3)^{\top}$ 
and $\wid{y}=(\wid{y}_1,\wid{y}_2, \wid{y}_3)^{\top}$, we define the complex distance
\begin{equation}\label{complexdistance}
d(\wid{x},\wid{y})=\left[(\wid{x}_1-\wid{y}_1)^2+(\wid{x}_2-\wid{y}_2)^2
+(\wid{x}_3-\wid{y}_3)^2\right]^{\frac{1}{2}}.
\end{equation}
Given $\bs{f}=(\bs{f}_s^{\top},f_4)^{\top}\in H^{\frac{1}{2}}(S_{r_0})^4$, let the PML 
extension of $E(\bs{f})(y)$ (see \eqref{E1E2}) be defined as $\wid{E}(\bs{f})(y)=E(\bs{f})(\wid{y})$, 
where the term $|\wid{y}-x|$ is replaced by $\wid{d}(\wid{y},x)$.
This extension is obviously a solution to the exterior Dirichlet problem
\begin{equation}\label{PML_exterior}
\begin{cases}
\omega^2(\rho\wid{\mathbf{u}} +\rho_f \wid{\mathbf{w}})+\wid{\De_c^*}\wid{\mathbf{u}} 
+\alpha M\wid{\nabla} \wid{\nabla}\cdot  \wid{\mathbf{w}}=0\quad {\rm in}\;\;B_{r_0}^c,\\
\omega^2(\rho_f \wid{\mathbf{u}}+m \wid{\mathbf{w}})+{\bf{i}}\omega b \wid{\mathbf{w}}
+\wid{\nabla} \wid{\nabla} \cdot [\alpha M \wid{\mathbf{u}}+ M \wid{\mathbf{w}}]=0 
\quad {\rm in}\;\;B_{r_0}^c,\\
(\mathbf{u}^{\top},	\bs{n}\cdot \mathbf{w})^{\top}=(\bs{f}_s^{\top},f_4)^{\top}\quad {\rm on} \;\;S_{r_0}.
\end{cases}
\end{equation} 
From the definition of $\wid{E}(\bs{f})$ and \eqref{E1E2_element}, we know that it has the 
form $\wid{E}(\bs{f})=[\wid{E}_1(\bs{f})^{\top},\wid{E}_1(\bs{f})^{\top}]^{\top}$, where
\begin{equation}\label{E1E2_PML}
\begin{aligned}
\wid{E}_1(\bs{f})(y)_{(k)}=&\int_{S_{r_0}}[\bs{D}(x-\wid{y})^{(k)}\cdot U(x)
-\bs{S}(x-\wid{y})^{(k)}\cdot U'(x)]ds(x), \\
\wid{E}_2(\bs{f})(y)_{(k)}=&\int_{S_{r_0}}[\bs{D_w}(x-\wid{y})^{(k)}\cdot U(x)
-\bs{S_w}(x-\wid{y})^{(k)}\cdot U'(x)]ds(x).
\end{aligned}
\end{equation}
All elements of the matrices $\bs{D}(x-\wid{y})$, $\bs{S}(x-\wid{y})$, $\bs{D_w}
(x-\wid{y})$ and $\bs{S_w}(x-\wid{y})$  are of the form $\ds\frac{e^{{\bf i}
k\wid{d}(\wid{y},x)}}{\wid{d}(\wid{y},x)^m}$ ($m\in\N^*$) multiplied by a 
multivariate polynomial in the variables
$$\begin{aligned}
\left\{  \wid{d}(\wid{y},x),\wid{y}_1,\wid{y}_2,\wid{y}_3,x_1,x_2,x_3,
\frac{x_1}{|x|},\frac{x_2}{|x|},\frac{x_3}{|x|} \right\}.
\end{aligned}$$
By imposing a certain homogeneous boundary condition on $S_{r_2}$, we obtain the following truncated PML problem
\begin{equation}\label{poromodifiedPML}
\begin{cases}
\omega^2(\rho\wid{\mathbf{u}} +\rho_f \wid{\mathbf{w}})+\wid{\De_c^*}\wid{\mathbf{u}} +\alpha M\wid{\nabla} 
\wid{\nabla}\cdot  \wid{\mathbf{w}}=\mathbf{Q}\quad {\rm in}\;\;\Om_2,\\
\omega^2(\rho_f \wid{\mathbf{u}}+m \wid{\mathbf{w}})+{\bf{i}}\omega b \wid{\mathbf{w}}+\wid{\nabla} \wid{\nabla} 
\cdot [\alpha M \wid{\mathbf{u}}+ M \wid{\mathbf{w}}]=0 \quad {\rm in}\;\;\Om_2,\\
(\mathbf{u}^{\top},	\bs{n}\cdot \mathbf{w})^{\top}=0\quad {\rm on} \;\;\pa \Om,\\
(\mathbf{u}^{\top},	\bs{n}\cdot \mathbf{w})^{\top}=0\quad {\rm on} \;\;S_{r_2}.
\end{cases}
\end{equation}

\subsection{The PML equation in truncated domain and layer}\label{subsec:PML equation}
\setcounter{equation}{0}
In this subsection, we shall show that the PML system \eqref{poromodifiedPML} in the truncated domain
has a unique weak solution in the space $H_0^1( \Om_2)^6$ under appropriate constraints 
on the poroelastic and PML parameters. We begin with a key lemma concerning the PML Helmholtz decomposition.
\begin{lemma}[PML Helmholtz decomposition]\label{PMLHD}
Let $D=B_{r_2}\ba\ov{\Om}$ or $D=B_{r_2}\ba\ov{B_{r_0}}$ be the truncated PML domain or the PML layer.
Define $V(D)=\{\bs{v}\in H^1(D)^3:\bs{n} \cdot\bs{v} =0 \;{\rm on}\;\pa D\}
$. The following two decompositions hold:
\begin{enumerate}
\item[(1)] For any $\bs{U}\in H_0^1(D)^3$, there exists $\theta_1 \in H^2(D)\cap H_0^1(D)$ and $\Phi_1\in H^1(D)^3$ 
such that $\bs{U}= \wid{\nabla}\theta_1 +\Phi_1$ and $\wid{\dive}\Phi_1=0$.
\item[(2)] For any $\bs{W}\in V(D)$, there exists $\theta_2 \in H^2(D)/\mathbb{C}$ and $\Phi_2\in V(D)$ such that 
$\bs{W}= \wid{\nabla}\theta_2 +\Phi_2$, $\wid{\dive}\Phi_2=0$ and $\bs{n}\cdot\wid{\nabla}\theta_2=0$ on $\pa D$.
\end{enumerate}
\end{lemma}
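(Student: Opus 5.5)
Both decompositions reduce to elliptic boundary value problems for a stretched Laplacian. By \eqref{relation_oper}, for a scalar $\theta$ we have
\begin{equation*}
\wid{\dive}\,\wid{\nabla}\theta=\frac{1}{\beta^2 s}\dive\bigl(P_2P_1^{-1}\nabla\theta\bigr),
\qquad P_2P_1^{-1}=\Diag\Bigl\{\frac{\beta^2}{s},\,s,\,s\Bigr\}
\end{equation*}
in the local spherical frame $\{\bs e_r,\bs e_\theta,\bs e_\phi\}$. Write $A=P_2P_1^{-1}$, a complex, symmetric, diagonal-in-the-spherical-frame matrix. In the region $\Om\subset B_{r_0}$ where $\sig=0$ we have $A=\mathbb{I}_3$, and $A$ is smooth on $\ov D$ because $\sig_m\in C^2$. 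For either decomposition we set $\Phi_j=\bs U-\wid{\nabla}\theta_j$, with $\bs U$ replaced by $\bs W$ when $j=2$. The condition $\wid{\dive}\Phi_j=0$ becomes
\begin{equation*}
\dive(A\nabla\theta_j)=\dive(P_2\bs U)\quad\text{in } D,
\end{equation*}
again with $\bs U$ replaced by $\bs W$ for $j=2$.

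\emph{Case (1).} We solve this equation with $\theta_1\in H_0^1(D)$. The associated sesquilinear form is $a(\theta,\eta)=\int_D A\nabla\theta\cdot\nabla\ov\eta\,dx$. We check coercivity. The diagonal entries are $s$ and $\beta^2/s$. Since $s=1+{\bf i}\sig$ and $\beta=1+{\bf i}\sig_m$ with $0\le\sig_m\le\sig$, the entry $s$ has positive real part. For $\beta^2/s$, the argument of $\beta^2$ is $2\arg\beta$ and the argument of $s$ is $\arg s\ge\arg\beta$, so $\arg(\beta^2/s)\in[\arg\beta-\arg s,\ \arg\beta]\subset(-\pi/2,\pi/2)$. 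Hence all eigenvalues lie in a common sector $\{|\arg z|\le\pi/2-\delta\}$, and a rotation $e^{{\bf i}\gamma}a(\theta,\theta)$ has real part bounded below by $c\|\nabla\theta\|^2$. This is the standard argument of \cite{BPT2010}, and $\delta$ depends on $\sig_0$ and $\max\sig$. Lax--Milgram then gives a unique $\theta_1\in H_0^1(D)$. The right-hand side $\dive(P_2\bs U)$ lies in $L^2$ because $\bs U\in H^1$, so elliptic regularity for smooth coefficients on a smooth domain yields $\theta_1\in H^2(D)$. Then $\Phi_1=\bs U-P_1^{-1}\nabla\theta_1\in H^1(D)^3$ and $\wid{\dive}\Phi_1=0$ by construction.

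\emph{Case (2).} We solve the same equation with the Neumann-type condition $\bs n\cdot A\nabla\theta_2=\bs n\cdot P_2\bs W$ on $\pa D$, posed in $H^1(D)/\mathbb{C}$. On $\pa D$ the normal is $\pm\bs e_r$ (for $S_{r_2}$, $S_{r_0}$) or $\pa\Om$, where $A=\mathbb{I}_3$. In all cases $\bs n\cdot P_2\bs W$ is a scalar multiple of $\bs n\cdot\bs W=0$, so the condition reads $\bs n\cdot A\nabla\theta_2=0$. This in turn is a nonzero multiple of $\bs n\cdot\wid{\nabla}\theta_2$, because $A\bs e_r=(\beta^2/s)\bs e_r$ and $P_1^{-1}\bs e_r=s^{-1}\bs e_r$. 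The compatibility condition holds because $\int_D\dive(P_2\bs W)\,dx=\int_{\pa D}\bs n\cdot P_2\bs W\,ds=0$. Coercivity of $a$ on $H^1(D)/\mathbb{C}$ follows from the same sectorial bound combined with Poincaré--Wirtinger. Neumann regularity then gives $\theta_2\in H^2(D)$, and $\Phi_2=\bs W-\wid{\nabla}\theta_2\in H^1$ satisfies $\bs n\cdot\Phi_2=0$, so $\Phi_2\in V(D)$.

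\emph{Main obstacle.} The delicate point is the uniform sectorial (coercivity) estimate for the anisotropic complex matrix $A$, together with the $H^2$ regularity. The regularity needs care because $A$ is expressed in spherical coordinates. However, $A$ is smooth as a Cartesian matrix field on $\ov D$, since the PML region is away from the origin and $A=\mathbb{I}_3$ near $\Om$. Standard regularity theory for strongly elliptic systems with complex coefficients then applies.
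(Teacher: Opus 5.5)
Your proposal is correct and follows essentially the same route as the paper. Both parts reduce $\wid{\dive}\Phi_j=0$ to $\dive(P_2P_1^{-1}\nabla\theta_j)=\dive(P_2\bs{U})$ (resp.\ with $\bs{W}$), posed with homogeneous Dirichlet data in (1) and homogeneous Neumann data in (2), and then conclude by coercivity of the diagonal entries $\beta^2/s$ and $s$, Lax--Milgram (with Poincar\'e--Wirtinger on $H^1(D)/\mathbb{C}$), and elliptic regularity.

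Your sector argument is equivalent to the paper's explicit bound $\Rp(\beta^2s^{-1})\geq (1+\Vert\sig\Vert_{L^\infty(B_{r_2})}^2)^{-1}$, so no rotation is actually needed. Your check that the natural conormal condition reduces to $\bs{n}\cdot\wid{\nabla}\theta_2=0$ makes explicit a step the paper leaves implicit.
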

\begin{proof}
(1) For any fixed $\bs{U}\in H_0^1(D)^3$, consider the Dirichlet problem
\begin{equation}\label{eq:dirichlet}
\wid{\Delta}\theta = \wid{\dive}\bs{U} \quad\text{in } D, \qquad \theta = 0 \quad\text{on } \pa D .
\end{equation}
From the relations of the stretched operators \eqref{relation_oper}-\eqref{stretched_laplace}, 
we have the identity
\begin{equation}
\wid{\Delta}\theta = \frac{1}{s\beta^2}\dive\bigl(P_2P_1^{-1}\nabla\theta\bigr).
\end{equation}
This leads to the weak formulation of \eqref{eq:dirichlet}: to find $\theta\in H_0^1(D)$ such that
\begin{equation*}
a(\theta,\psi):=\int_D P_2P_1^{-1}\nabla\theta\cdot\nabla\overline{\psi}\,dx
= -\int_D s\beta^2\,(\wid{\dive}\bs{U})\,\overline{\psi}\,dx
\quad \forall \psi\;\in H_0^1(D).
\end{equation*}
It follows from the definitions of $P_1$ and $P_2$ (see \eqref{P1P2}) that
\begin{equation}\label{eq:variation}
\begin{aligned}
\Rp a(\psi,\psi)&=\Rp\int_{D}P_2P_1^{-1}\nabla \psi\cdot\nabla\overline{\psi}dx \\
&= \Rp\int_{D}\Diag\{\beta^2s^{-1},s,s\}\nabla \psi\cdot\nabla\overline{\psi}dx \\
&=\Rp\int_{D}\left[\beta^2s^{-1}\vert (\nabla \psi)_r \vert^2+s\vert (\nabla \psi)_\theta\vert^2
+s\vert (\nabla \psi)_\phi \vert^2\right]dx.
\end{aligned}
\end{equation}
From the definition of $\beta$ in \eqref{stretched_r} and the fact that $\sig_m\le\sig$ for $r\ge r_0$, we obtain
\begin{equation}\label{bs_lower_bound}
\Rp(\beta^2s^{-1}) = \Rp\frac{(1+{\bf i}\sig_m)^2}{1+{\bf i}\sig} = \frac{1-\sig_m^2+2\sig\sig_m}{1+\sig^2} 
\geq \frac{1}{1+\Vert\sig\Vert_{L^{\infty}(B_{r_2})}^2}.
\end{equation}
This, combined with \eqref{eq:variation} yields
\begin{equation}\label{lowerbound_a}
\Rp a(\theta,\theta) \geq
\frac{\lVert \nabla \theta \rVert_{{L^2(D)^3}}^2}{1+\Vert\sig\Vert_{L^{\infty}(B_{r_2})}^2}.
\end{equation}
By the Poincar\'e inequality and elliptic regularity theory, problem \eqref{eq:dirichlet} 
admits a unique solution $\theta_1\in H^2(D)\cap H_0^1(D)$. Setting $\Phi_1 = \bs{U} - 
\wid{\nabla}\theta_1$ gives the desired decomposition.

(2) For any fixed $\bs{W}\in V(D)$, consider the Neumann problem
\begin{equation}\label{eq:neumann}
\wid{\Delta}\theta=\wid{\dive}\bs{W} \quad\text{in } D, \qquad \bs{n}\cdot \wid{\nabla}\theta=0 
\quad\text{on } \pa D.
\end{equation}
The compatibility conditions are satisfied since
\begin{equation*}
\begin{aligned}
\int_{D}\dive(P_2P_1^{-1}\nabla\theta)dx &=\int_{\pa D}\bs{n}\cdot P_2P_1^{-1}\nabla\theta ds(x)= 
\int_{\pa D}\frac{\beta^2}{s}\bs{n}\cdot \nabla\theta ds(x)=0\\
\int_{D}\dive(P_2\bs{W})dx&=\int_{\pa D}\bs{n}\cdot P_2\bs{W}ds(x)= \int_{\pa D}\beta^2\bs{n}\cdot\bs{W}ds(x)=0.
\end{aligned}
\end{equation*}
Proceeding as in the first part, the Lax-Milgram theorem and elliptic regularity theory implies the 
 existence of the unique solution
$\theta_2 \in H^2(D)/\C$ to the problem \eqref{eq:neumann}. Defining $\Phi_2 = \bs{W} - \wid{\nabla}\theta_2$ 
completes the proof.
\end{proof}
For $D=B_{r_2}\ba\ov{\Om}$ or $D=B_{r_2}\ba\ov{B_{r_0}}$, we introduce the sesquilinear form similar to
 $\mathcal{B}_{\Om_0}$ in \eqref{B_Om0}
\begin{align}\label{poropmlbiform_B}
\mathcal{B}_{D}(\Phi,\Psi)=&\int_{D}\Bigg\{\mu P_2P_1^{-1}\nabla\Phi^s:\nabla\overline{\Psi^s} 
+\frac{1}{\beta^2s}\Big[(\la_c+\mu)(\nabla\cdot P_2\Phi^s)(\nabla\cdot P_2\overline{\Psi^s})\\ \no
&+\alpha M(\nabla\cdot P_2\Phi^f)(\nabla\cdot P_2\overline{\Psi^s})+\alpha M(\nabla\cdot P_2\Phi^s)
(\nabla\cdot P_2\overline{\Psi^f})+ M(\nabla\cdot P_2\Phi^f)(\nabla\cdot P_2\overline{\Psi^f})\Big]\\ \no
&-\omega^2\beta^2s(\rho\Phi^s\cdot\overline{\Psi^s} +\rho_f \Phi^f\cdot\overline{\Psi^s}+\rho_f\Phi^s
\cdot\overline{\Psi^f} +m\Phi^f\cdot\overline{\Psi^f})-{\bf i}\omega b\beta^2s\Phi^f\cdot\overline{\Psi^f} \Bigg\}dx
\end{align}
where $\Phi=\begin{bmatrix}
\Phi^s \\
\Phi^f
\end{bmatrix}$, $\Psi=\begin{bmatrix}
\Psi^s \\
\Psi^f
\end{bmatrix}$. 
Now we are in the position to present the main result of this subsection.
\begin{theorem}\label{maintheorem}
For given $\bs{P}=(\bs{U}^{\top},\bs{V}^{\top})^{\top}\in H_0^1(D)^3\times V(D)$, the variational problem
\begin{equation}\label{mainweakequation}
\begin{aligned}
\mathcal{B}_{D}(\Phi,\Psi) =(\beta^2s\bs{P},\Psi )_D
,\;\forall\;\Psi \in H_0^1(D)^3\times V(D)
\end{aligned}
\end{equation}
has a unique weak solution $\Phi\in H_0^1(D)^3\times V(D)$ under the smallness assumption on $\omega$
 and $\alpha$, where $(\cdot,\cdot)_D$ denotes the inner product of $L^2(D)$.
\begin{proof} 
It suffices to show the system 
\begin{equation}\label{pmlporoelastic1}
\begin{aligned}
\wid{\De_c^*}\Phi^s+\alpha M \wid{\nabla}\wid{\nabla}\cdot\Phi^f+\omega^2\rho \Phi^s 
+\omega^2\rho_f \Phi^f= \bs{U}& \quad\text{in }  D,\\
\wid{\nabla}\wid{\nabla}\cdot(\alpha M\Phi^s+ M \Phi^f)
+ \omega^2(\rho_f \Phi^s+m \Phi^f)+{\bf i}\omega b \Phi^f= \bs{V}& \quad\text{in }  D,\\
(\Phi^s,\bs{n}\cdot\Phi^f)=0 & \quad\text{on } \pa D,
\end{aligned}
\end{equation}
has a unique weak solution $(\Phi^s,\Phi^f)$ in $H_0^1(D)^3\times V(D)$.

We first prove the existence. By Lemma \ref{PMLHD} we decompose
\begin{equation}\label{PMLHDsources}
\bs{U} = \wid{\nabla}\theta_1 + \Phi_1,\quad
\bs{V} = \wid{\nabla}\theta_2 + \Phi_2,
\end{equation}
with $\theta_1\in H^2(D)\cap H_0^1(D)$ and $\theta_2\in H^2(D)/\mathbb{C}$.

i) \textit{Scalar potential system.}
Introduce the Sobolev space
\begin{equation*}
V_1(D) = \left\{\bs{v}=(v_1,v_2)\in H^1(D)^2 : \ds\int_D v_1dx = \int_D v_2dx = 0\right\}.
\end{equation*}
Motivated by the derivation of equations \eqref{HeD3}-\eqref{HeD4} via the Helmholtz decomposition, 
we consider the system
\begin{align}
(\la_c+2\mu)\wid{\Delta}\phi_1 + \alpha M\wid{\Delta}\phi_2 + \omega^2\rho\phi_1 + \omega^2\rho_f\phi_2 
= \theta_1  &\quad\text{in }  D,\label{biharmonic1}\\
\alpha M\wid{\Delta}\phi_1 + M\wid{\Delta}\phi_2 + \omega^2(\rho_f\phi_1+m\phi_2) + {\bf i}\omega b\phi_2 
= \theta_2  &\quad\text{in }  D, \label{biharmonic2}\\
\bs{n}\cdot\wid{\nabla}\phi_1 =\bs{n}\cdot\wid{\nabla}\phi_2 = 0 &\quad\text{on } \pa D. 
\label{biharmonicboundaryconditions}
\end{align}
For $\bs{p}=(\phi_1,\phi_2)$, $\bs{q}=(\psi_1,\psi_2)\in V_1(D)$, we introduce the sesquilinear 
form for the system \eqref{biharmonic1}-\eqref{biharmonicboundaryconditions} 
\begin{equation*}
\begin{aligned}
\mathcal{B}_{1}(\bs{p},\bs{q})=&\int_{D}\Big[ (\la_c+2\mu) P_2P_1^{-1}\nabla\phi_1\cdot
\nabla\overline{\psi_1} +\alpha M P_2P_1^{-1}\nabla\phi_2\cdot\nabla\overline{\psi_1}+
\alpha M P_2P_1^{-1}\nabla\phi_1\cdot\nabla\overline{\psi_2}\\
& + M P_2P_1^{-1}\nabla\phi_2\cdot\nabla\overline{\psi_2} -\omega^2\beta^2s
(\rho\phi_1\cdot\overline{\psi_1} +\rho_f \phi_2\cdot\overline{\psi_1}+
\rho_f\phi_1\cdot\overline{\psi_2} +m\phi_2\cdot\overline{\psi_2})\\
&-{\bf i}\omega b\beta^2s\phi_2\cdot\overline{\psi_2}\Big] dx.
\end{aligned}
\end{equation*}
Taking the real part of the sesquilinear form $\mathcal{B}_{1}$ yields
\begin{align}
\Rp\mathcal{B}_{1}(\bs{p},\bs{p})=&\Rp\int_{D} \Big[(\la_c+2\mu) P_2P_1^{-1}|\nabla\phi_1|^2
+ M P_2P_1^{-1}|\nabla\phi_2|^2\Big]dx \no\\ \label{realpart_B1}
&+\Rp\int_{D}\alpha MP_2P_1^{-1}\left(\nabla\phi_2\cdot\nabla\overline{\phi_1}+\nabla\phi_1
\cdot\nabla\overline{\phi_2}\right)dx\\ \no
& -\Rp\int_{D}\left[\omega^2\beta^2s(\rho\lvert\phi_1\rvert^2 +\rho_f \phi_2\cdot
\overline{\phi_1}+\rho_f\phi_1\cdot\overline{\phi_2} +m\lvert\phi_2\rvert^2)-{\bf i}
\omega b\beta^2s|\phi_2|^2\right] dx.
\end{align}
In what follows, we shall estimate \eqref{realpart_B1} term by term. 
Similar to \eqref{lowerbound_a}, we obtain the following inequalities
\begin{equation}
\begin{aligned}
&\Rp\int_{D} \Big[(\la_c+2\mu) P_2P_1^{-1}|\nabla\phi_1|^2+ M P_2P_1^{-1}|\nabla\phi_2|^2\Big]dx\\
&\geq \frac{(\la_c+2\mu)\lVert \nabla \phi_1 \rVert_{{L^2(D)^3}}^2+M\lVert 
\nabla \phi_2 \rVert_{{L^2(D)^3}}^2}{1+\Vert\sig\Vert_{L^{\infty}(B_{r_2})}^2},
\end{aligned}
\end{equation}
and
\begin{equation}\label{estimate_realpartB1}
\begin{aligned}
&\Rp\int_{D}\alpha MP_2P_1^{-1}\left(\nabla\phi_2\cdot\nabla\overline{\phi_1}+
\nabla\phi_1\cdot\nabla\overline{\phi_2}\right)dx\\
\leq &\int_{D}\alpha M\max\{\Rp(\beta^2s^{-1}),1\}(|\nabla \phi_1|^2+|\nabla \phi_2|^2)dx\\
\leq &\; 2\alpha M\left(\lVert \nabla \phi_1 \rVert_{{L^2(D)^3}}^2+\lVert \nabla \phi_2 
\rVert_{{L^2(D)^3}}^2\right),
\end{aligned}
\end{equation}
where we have used the estimate in the last inequality
\begin{equation*}
\Rp(\beta^2s^{-1}) = \Rp\frac{(1+{\bf i}\sig_m)^2}{1+{\bf i}\sig} 
= \frac{1-\sig_m^2+2\sig\sig_m}{1+\sig^2}
\leq \frac{1+2\sig^2}{1+\sig^2}\leq 2.
\end{equation*}
For the last term of \eqref{realpart_B1}, it holds that
\begin{equation}\label{estimate_realpartB1_3}
\begin{aligned}
&\Rp\int_{D}\left[\omega^2\beta^2s(\rho\lvert\phi_1\rvert^2 +\rho_f \phi_2
\cdot\overline{\phi_1}+\rho_f\phi_1\cdot\overline{\phi_2} +m\lvert\phi_2\rvert^2)
-{\bf i}\omega b\beta^2s|\phi_2|^2\right] dx\\
\leq & \int_{D} \left\{\om^2\Rp(\beta^2s)\left[(\rho+\rho_f)\lvert\phi_1\rvert^2
+(m+\rho_f)\lvert\phi_2\rvert^2\right]+\omega b\Ip(\beta^2s)|\phi_2|^2 \right\}dx\\
\leq & \int_{D} \left\{\om^2\left[(\rho+\rho_f)\lvert\phi_1\rvert^2+(m+\rho_f)
\lvert\phi_2\rvert^2\right]+3\omega b\|\sig\|_{L^{\infty}(B_{r_2})}|\phi_2|^2 \right\}dx\\
= &\; \om^2(\rho+\rho_f)\|\phi_1\|_{L^2(D)}^2+\om\left[\om(m+\rho_f)+
3 b\|\sig\|_{L^{\infty}(B_{r_2})}\right]\|\phi_2\|_{L^2(D)}^2,
\end{aligned}
\end{equation}
where we have used the estimates in the second inequality
\begin{equation*}
\begin{aligned}
\Rp(\beta^2s) &= \Rp[(1+{\bf i}\sig_m)^2(1+{\bf i}\sig)] 
= 1-\sig_m^2-2\sig\sig_m\leq 1,\\
\Ip(\beta^2s) &= \Ip[(1+{\bf i}\sig_m)^2(1+{\bf i}\sig)] 
= (1-\sig_m^2)\sig+2\sig_m\leq 3\|\sig\|_{L^{\infty}(B_{r_2})}.
\end{aligned}
\end{equation*}  
Combining \eqref{realpart_B1}-\eqref{estimate_realpartB1_3}, it follows that 
$$\begin{aligned}
\Rp\mathcal{B}_{1}(\bs{p},\bs{p})
\geq &\frac{(\la_c+2\mu)\lVert \nabla \phi_1 \rVert_{{L^2(D)^3}}^2
+M\lVert \nabla \phi_2 \rVert_{{L^2(D)^3}}^2}{1+\Vert\sig\Vert_{L^{\infty}
(B_{r_2})}^2}-2\alpha M\left(\lVert \nabla \phi_1 \rVert_{{L^2(D)^3}}^2+
\lVert \nabla \phi_2 \rVert_{{L^2(D)^3}}^2\right)\\
&-\om^2(\rho+\rho_f)\|\phi_1\|_{L^2(D)}^2-\om\left[\om(m+\rho_f)+
3 b\|\sig\|_{L^{\infty}(B_{r_2})}\right]\|\phi_2\|_{L^2(D)}^2
\end{aligned}
$$ 
Therefore, for small $\alpha$ and $\omega$, it follows from the Poincaré inequality that
$$\begin{aligned}
&\Rp\mathcal{B}_{1}(\bs{p},\bs{p})\geq C( \lVert \phi_1\rVert_{H^1(D)}^2+
\lVert \phi_2\rVert_{H^1(D)}^2)=C\lVert \bs{p} \rVert_{V_1(D)}^2.
\end{aligned}
$$ 
Thus, the system \eqref{biharmonic1}-\eqref{biharmonicboundaryconditions} 
admits a unique solution $(\phi_1,\phi_2) \in V_1(D)$. Moreover, the standard 
elliptic regularity theory gives $(\phi_1,\phi_2) \in H^4(D)$.

ii) \textit{Vector potential system.} Consider the system for ${\Psi_1}$ and ${\Psi_2}$:
\begin{align}\label{maxwell1}
\mu\wid{\Delta} \Psi_1+\Big(\omega^2\rho -\frac{\omega^3\rho_f^2}{\omega m+{\bf i}b}\Big)\Psi_1
=\Phi_1-\frac{\omega\rho_f}{\omega m+{\bf i}b}\Phi_2 & \quad\text{in }  D,\\ \label{maxwell2}
\Psi_2=\frac{\Phi_2}{\omega^2m+{\bf i}\omega b}-\frac{\omega\rho_f}{\omega m+
{\bf i}b}\Psi_1 & \quad\text{in }  D,\\ \label{pmlmaxwellboundaryconditions}
\Psi_1=-\wid{\nabla}_{\bs{\tau}}\phi_1 &\quad \text{on } \pa D.
\end{align}
Here $\phi_1$ is the solution to the system \eqref{biharmonic1}-\eqref{biharmonicboundaryconditions} 
and $\wid{\nabla}_{\bs{\tau}}\phi_1$ is the tangential component of the vector
$\wid{\nabla}\phi_1$ on $\pa D$.

For $\Psi_1,\Psi'\in H^1(D)^3,$ we introduce the corresponding sesquilinear form
\begin{equation*}
\mathcal{B}_{2}(\Psi_1,\Psi')=\int_{D}\left[\mu P_2P_1^{-1}\nabla\Psi_1:\nabla\overline{\Psi^{'}} 
- \Big(\omega^2\rho -\frac{\omega^3\rho_f^2}{\omega m+{\bf i}b}\Big)\beta^2s\Psi_1\cdot \overline{\Psi^{'}}\right]dx,
\end{equation*} 
which can be easily verified to be coercive in $H_0^1(D)^3$ for small $\om$. Thus problem
\eqref{maxwell1}-\eqref{pmlmaxwellboundaryconditions} has a unique solution $\Psi_1\in H^2(D)^3$ 
by the Lax-Milgram theorem and elliptic regularity theory.  The following integral identity holds 
from \cite[equation (3.5)]{BPT2010}
\begin{equation}\label{int_iden}
\int_{D}\beta^2s(\wid{\Delta}\Psi_1)\cdot(\wid{\nabla}\phi)dx=\int_{D}\beta^2s(\wid{\nabla}
(\wid{\nabla}\cdot\Psi_1 ))\cdot(\wid{\nabla}\phi)dx, \quad \forall \phi\in C_0^{\infty}(D).
\end{equation}
Multiplying both sides of \eqref{maxwell1} by $\beta^2s\wid{\nabla}\overline{\phi}$ and integrating 
by parts, using \eqref{int_iden} and conditions $\wid{\nabla}\cdot \Phi_1=\wid{\nabla}\cdot \Phi_2=0$, 
we obtain that $\wid{\nabla}\cdot \Psi_1$ satisfies
\begin{equation}
\int_{D}P_2P_1^{-1}\nabla(\wid{\nabla}\cdot\Psi_1 )\cdot(\nabla\overline{\phi})dx+\Big(\omega^2\rho 
-\frac{\omega^3\rho_f^2}{\omega m+{\bf i}b}\Big)\int_{D}\beta^2s(\wid{\nabla}\cdot\Psi_1)\overline{\phi}dx=0.
\end{equation}
For sufficiently small $\omega$, it follows from the definition of $P_2P_1^{-1}$ and 
\eqref{bs_lower_bound} that $\wid{\nabla}\cdot \Psi_1=0$ in $D$, then using \eqref{maxwell2} 
yields $\wid{\nabla}\cdot \Psi_2=0$ in $D$.
Therefore, using the identity $\wid{\Delta}\Psi_1=-\wid{\nabla}\times\wid{\nabla}\times\Psi_1$ 
(since $\wid{\nabla}\cdot\Psi_1=0$), and direct calculations yield an equavilent form of the 
system \eqref{maxwell1}-\eqref{pmlmaxwellboundaryconditions}
\begin{align}\label{eq:Phi1}
-\mu\wid{\nabla}\times\wid{\nabla} \times \Psi_1+\omega^2(\rho\Psi_1 +\rho_f \Psi_2)
=\Phi_1 & \quad\text{in }  D,\\\label{eq:Phi2}
\omega^2(\rho_f \Psi_1+m \Psi_2)+{\bf i}\omega b \Psi_2=\Phi_2 & \quad\text{in }  D,\\ \label{eq:Phi_bc}
\Psi_1=-\wid{\nabla}_{\bs{\tau}}\phi_1 &\quad {\rm on}\;\pa D.
\end{align}
iii) \textit{Construction of the full solution.} 
Taking $\wid{\nabla}$ on both sides of \eqref{biharmonic1}-\eqref{biharmonic2} and using the divergence
free condition on $\Psi_{1,2}$ gives
\begin{align}\label{nablatheta1}
(\la_c+2\mu) \wid{\nabla}\wid{\nabla}\cdot(\wid{\nabla} \phi_1+\Psi_1)+\alpha M \wid{\nabla}
\wid{\nabla}\cdot(\wid{\nabla} \phi_2+\Psi_2)+\omega^2\rho \wid{\nabla}\phi_1 +\omega^2\rho_f 
\wid{\nabla}\phi_2&= \wid{\nabla}\theta_1,\\ \label{nablatheta2}
\alpha M \wid{\nabla}\wid{\nabla}\cdot(\wid{\nabla} \phi_1+\Psi_1)+ M \wid{\nabla}\wid{\nabla}
\cdot(\wid{\nabla} \phi_2+\Psi_2)+ \omega^2(\rho_f \wid{\nabla}\phi_1+m \wid{\nabla}\phi_2)
+{\bf i}\omega b \wid{\nabla}\phi_2&= \wid{\nabla}\theta_2.
\end{align}
Adding \eqref{eq:Phi1} to \eqref{nablatheta1}, and \eqref{eq:Phi2} to \eqref{nablatheta2} and 
using \eqref{PMLHDsources}, it follows that 
$\Phi^s = \wid{\nabla}\phi_1 + \Psi_1$ and $\Phi^f = \wid{\nabla}\phi_2 + \Psi_2$ solve the 
PML system \eqref{pmlporoelastic1}.

Now we prove the uniqueness. Assume $\mathcal{B}_{D}(\Phi,\Psi)=0$ for all $\Psi\in H_0^1(D)^3\times V(D)$.
For the given data $\bs{P}=\ov{\Phi}/\beta^{2}s \in H_0^1(D)^3\times V(D)$, there exists a solution
$\wid{\Phi}$ to the variational problem \eqref{mainweakequation}.
Taking $\Psi=\ov{\Phi}$ in \eqref{mainweakequation} yields
\begin{equation*}
\left(\beta^2s\frac{\ov{\Phi}}{\beta^2s},\ov{\Phi} \right)_D= (\overline{\Phi},\overline{\Phi} )_D
= \mathcal{B}_{D}(\wid{\Phi},\overline{\Phi}) = \mathcal{B}_{D}(\Phi,\overline{\wid{\Phi}})=0.
\end{equation*}	
Hence $\Phi = 0$, which completes the proof.
\end{proof}

\end{theorem}
In what follows, we consider a PDE system in the PML layer for the convergence analysis of the PML method:
\begin{subequations}
\begin{align}\label{PMLlayerstabilityequa1}
\omega^2(\rho\wid{\mathbf{u}} +\rho_f \wid{\mathbf{w}})+\wid{\De_c^*}\wid{\mathbf{u}} +\alpha M\wid{\nabla} 
\wid{\nabla}\cdot  \wid{\mathbf{w}}=0
\quad &{\rm in} \;\Om_{\PML} , \\ \label{PMLlayerstabilityequa22}
\omega^2(\rho_f \wid{\mathbf{u}}+m \wid{\mathbf{w}})+{\bf{i}}\omega b \wid{\mathbf{w}}+\wid{\nabla} \wid{\nabla} 
\cdot [\alpha M \wid{\mathbf{u}}+ M \wid{\mathbf{w}}]=0 
\quad &{\rm in} \;\Om_{\PML},\\\label{PMLlayerconditionouter}
(\wid{\mathbf{u}}^{\top},	\bs{n}\cdot\wid{\mathbf{w}})^{\top} =(\bs{F}_s^{\top},F_4)^{\top} \quad &{\rm on} 
\; S_{r_2},\\ \label{PMLlayerconditioninner}
(\wid{\mathbf{u}}^{\top},	\bs{n}\cdot\wid{\mathbf{w}})^{\top}=0 \quad &{\rm on} \; S_{r_0},
\end{align}
\end{subequations}
where  $\bs{F}_s\in H^{\frac{1}{2}}(S_{r_2})^3$ and $F_4\in H^{\frac{1}{2}}(S_{r_2})$.
Then the variational formulation of \eqref{PMLlayerstabilityequa1}-\eqref{PMLlayerconditioninner} 
is as follows: given $(\bs{F}_s^{\top},F_4)^{\top}\in H^{1/2}(S_{r_2})^4$, find $	(\wid{\mathbf{u}}^{\top},
\wid{\mathbf{w}}^{\top})^{\top} \in 
H_{S_{r_0}}^1(\Om_{\PML})^6=:\{\Phi=(\Phi_s^{\top},\Phi_f^{\top})^{\top}\in H^1(\Om_{\PML})^6: (\Phi_s^{\top},
\bs{n}\cdot \Phi_f)^{\top}=0\;{\rm on}\;S_{r_0}\}$ such that
\begin{equation}\label{varform_BPML}
\mathcal{B}_{\Om_\PML}((\wid{\mathbf{u}}^{\top},	 \wid{\mathbf{w}}^{\top})^{\top} ,\Psi)=0, \quad\forall\;
\Psi\in H_0^1(\Om_{\PML})^6.
\end{equation}

\begin{theorem}\label{pmllayer}
Let $D=\Om_{\PML}$ and assume the parameters satisfy the constraints in the theorem \ref{maintheorem}. 
For given $(\bs{F}_s^{\top},F_4)^{\top}\in H^{1/2}(S_{r_2})^4$, the system \eqref{PMLlayerstabilityequa1}
-\eqref{PMLlayerconditioninner}
admits a unique weak solution $(\wid{\mathbf{u}}^{\top}, \wid{\mathbf{w}}^{\top})^{\top}\in H^{1}_{S_{r_0}}
(\Om_{\PML})^6$ such that $(\wid{\mathbf{u}}^{\top},\bs{n}\cdot\wid{\mathbf{w}})^{\top}
=(\bs{F}_s^{\top},F_4)^{\top}$ on $S_{r_2}$. Moreover, the following stability estimate holds for the system 
\eqref {PMLlayerstabilityequa1}-\eqref {PMLlayerconditioninner}
\begin{equation}\label{stability_pmllayer}
\Vert (\wid{\mathbf{u}}^{\top},	 \wid{\mathbf{w}}^{\top})^{\top} \Vert_{H^1(\Om_{\PML})^6}
\leq C(d)\Vert (\bs{F}_s^{\top},F_4)^{\top} \Vert_{H^{1/2}(S_{r_2})^4},
\end{equation}
where the constant $C(d)$ depends on the thickness $d$ and grows at most polynomially in $d$.
\begin{proof}
The existence and uniqueness follow by decomposing the problem into scalar and vector potential 
systems exactly as in Theorem \ref{maintheorem}. Specifically, the solution $(\Phi^s,\Phi^f)$ of 
\eqref{PMLlayerstabilityequa1}-\eqref{PMLlayerconditioninner} can be constructed by
\begin{equation}
\Phi^s= \wid{\nabla}\phi_1 +\Psi_1,\;\Phi^f= \wid{\nabla}\phi_2 +\Psi_2\quad{\rm in}\;\Om_{\PML},
\end{equation}
where the scalar potential $(\phi_1,\phi_2)\in V_1(\Om_{\PML})$ satisfies the PDE system 
\eqref{biharmonic1}-\eqref{biharmonicboundaryconditions}
with $\theta_1=\theta_2=0$ and $\bs{n}\cdot \wid{\nabla}\phi_1=\bs{n}\cdot\wid{\nabla}\phi_2=0$ on $S_{r_0}$,
$\bs{n}\cdot \wid{\nabla}\phi_1=\bs{n}\cdot\bs{F}_s,\;\bs{n}\cdot\wid{\nabla}\phi_2=F_4$ on $S_{r_2}$, 
and the vector potential $(\Psi_1,\Psi_2)$ satisfies \eqref{eq:Phi1}-\eqref{eq:Phi_bc} with $\Phi_1=\Phi_2=0$ 
and $\Psi_1=-\wid{\nabla}_{\bs{\tau}}\phi_1$ on $S_{r_0}$,
$\Psi_1=\bs{F}_{s\tau}-\wid{\nabla}_{\bs{\tau}}\phi_1$ on $S_{r_2}$. 

Moreover, by the standard regularity and stability theory for strongly elliptic systems on smooth domains 
(see, e.g., \cite[Theorems 4.16 and 4.18]{McLean2000}), we obtain the $H^2$ estimate 
\begin{align}\no
\Vert \phi_1\Vert_{H^2(\Om_\PML)}+\Vert\phi_2\Vert_{H^2(\Om_\PML)}&\leq C(\Vert \phi_1\Vert_{H^1(\Om_\PML)}
+\Vert\phi_2\Vert_{H^1(\Om_\PML)}\\\label{layerbiharmonicellipiticestimates}
&\quad+\Vert \bs{n}\cdot\bs{F}_s\Vert_{H^{1/2}(S_{r_2})}+\Vert F_4\Vert_{H^{1/2}(S_{r_2} )})\\ \no
&\leq C(\Vert \bs{n}\cdot\bs{F}_s\Vert_{H^{1/2}(S_{r_2})}+\Vert F_4\Vert_{H^{1/2}(S_{r_2} )}),
\end{align}
and the stability estimate for $\Psi_1$
\begin{equation}\label{layermaxwellellipticstab}
\|\Psi_1\|_{H^1(\Om_\PML)} \leq C\left(\|\bs{F}_{\tau}-\wid{\nabla}_{\bs{\tau}}\phi_1\|_{H^{1/2}(S_{r_2})^3}
+\|\wid{\nabla}_{\bs{\tau}}\phi_1\|_{H^{1/2}(S_{r_0})^3}\right).
\end{equation}

We now establish the stability estimate for the problem \eqref{PMLlayerstabilityequa1}-\eqref{PMLlayerconditioninner}. 
First, using triangle inequality yields
\begin{equation*}
\begin{aligned}
\Vert\Phi^s \Vert_{H^1(\Om_\PML)^3}&\leq	\Vert\wid{\nabla}\phi_1  \Vert_{H^1(\Om_\PML)^3}
+\Vert\Psi_1 \Vert_{H^1(\Om_\PML)^3}	
\leq C(d)(\Vert \phi_1\Vert_{H^2(\Om_\PML)}+\Vert\Psi_1 \Vert_{H^1(\Om_\PML)^3}),\\
\Vert\Phi^f \Vert_{H^1(\Om_\PML)^3}&\leq	\Vert\wid{\nabla}\phi_2  \Vert_{H^1(\Om_\PML)^3}
+\Vert\Psi_2 \Vert_{H^1(\Om_\PML)^3}	
\leq C(d)(\Vert \phi_2\Vert_{H^2(\Om_\PML)}+\Vert\Psi_2 \Vert_{H^1(\Om_\PML)^3}).
\end{aligned}
\end{equation*}
Next, we use estimates \eqref{layerbiharmonicellipiticestimates} and \eqref{layermaxwellellipticstab} to obtain
\begin{align}\label{layerstabability1}
\Vert\Phi^s \Vert_{H^1(\Om_\PML)^3}+\Vert\Phi^f \Vert_{H^1(\Om_\PML)^3}\leq\;&C\big(\Vert 
\bs{n}\cdot\bs{F}_s\Vert_{H^{1/2}(S_{r_2})}+\Vert F_4\Vert_{H^{1/2}(S_{r_2} )}\\ \no
&+\Vert \bs{F}_{s\tau}-\wid{\nabla}_{\bs{\tau}}\phi_1\Vert_{H^{1/2}(S_{r_2} )^3}\big).
\end{align}
Using the triangle inequality and the trace theorem yields
\begin{equation}\label{layermaxwelltrace}
\begin{aligned}
\Vert \bs{F}_{s\tau}-\wid{\nabla}_{\bs{\tau}}\phi_1\Vert_{H^{1/2}(S_{r_2} )^3}&\leq
\Vert \bs{F}_{s\tau}\Vert_{H^{1/2}(S_{r_2})^3}+\Vert \wid{\nabla}_{\bs{\tau}}\phi_1\Vert_{H^{1/2}(S_{r_2} )^3}\\
&\leq \Vert \bs{F}_{s\tau}\Vert_{H^{1/2}(S_{r_2})^3}+\Vert \wid{\nabla}\phi_1\Vert_{H^{1}(\PML)^3} \\
&\leq \Vert \bs{F}_{s\tau}\Vert_{H^{1/2}(S_{r_2})^3}+C(d)\Vert \phi_1\Vert_{H^{2}(\PML)} .
\end{aligned}
\end{equation}
Finally, substituting inequality \eqref{layermaxwelltrace} into \eqref{layerstabability1} and observing that
\begin{equation*}
\Vert \bs{n}\cdot\bs{F}_s\Vert_{H^{1/2}(S_{r_2})}+ \Vert \bs{F}_{s\tau}\Vert_{H^{1/2}(S_{r_2})^3}
\leq C \Vert \bs{F}_{s}\Vert_{H^{1/2}(S_{r_2})^3},
\end{equation*}
we obtain the desired estimate \eqref{stability_pmllayer}.

\end{proof}
\end{theorem}

\subsection{Convergence analysis of the PML method}\label{subsec:convergence}
In this subsection, we shall show the convergence analysis of the PML method. We start with 
the following elementary lemma \cite[lemma 2]{CCZ2013}.
\begin{lemma}\label{keyestimate}
For any $z_j=a_j+{\bf i}b_j$ with $a_j,b_j\in\R$, $j=1,2,3$, such that $a_1b_1+a_2b_2+a_3b_3\geq 0$ and $a_1^2+a_2^2+a_3^2>0$, we have
$$\Ip(z_1^2+z_2^2+z_3^2)^{\frac{1}{2}}\geq \frac{a_1b_1+a_2b_2+a_3b_3}{\sqrt{a_1^2+a_2^2+a_3^2}}.$$
\end{lemma}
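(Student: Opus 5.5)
The plan is to compute the square root explicitly through its real and imaginary parts and reduce the claim to the Cauchy--Schwarz inequality. Write $\bs{a}=(a_1,a_2,a_3)^{\top}$ and $\bs{b}=(b_1,b_2,b_3)^{\top}$ in $\R^3$. Expanding the squares gives
\begin{equation*}
Z:=z_1^2+z_2^2+z_3^2=\bigl(|\bs{a}|^2-|\bs{b}|^2\bigr)+2{\bf i}\,\bs{a}\cdot\bs{b}.
\end{equation*}
Set $Z^{1/2}=p+{\bf i}q$, using the branch fixed in \eqref{z1/2}. Then $p\ge 0$, and comparing real and imaginary parts of $(p+{\bf i}q)^2=Z$ gives
\begin{equation*}
p^2-q^2=|\bs{a}|^2-|\bs{b}|^2,\qquad pq=\bs{a}\cdot\bs{b}\ge 0 .
\end{equation*}
Since $\Ip Z\ge 0$, formula \eqref{z1/2} gives $q\ge 0$. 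This also holds on the branch cut, where $\Ip Z=0$, because of the limiting convention $\varepsilon\to0^+$ stated after \eqref{z1/2}.

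The argument then splits into two cases. If $\bs{a}\cdot\bs{b}=0$, the right-hand side of the claimed inequality is $0$, and $q\ge0$ already gives the result. If $\bs{a}\cdot\bs{b}>0$, then $pq>0$, so both $p>0$ and $q>0$. The desired inequality $q\ge \bs{a}\cdot\bs{b}/|\bs{a}|=pq/|\bs{a}|$ is then equivalent to $p\le|\bs{a}|$; here the hypothesis $|\bs{a}|>0$ is used to divide.

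The main step, though short, is proving $p\le|\bs{a}|$, and I would do it by contradiction. Suppose $p>|\bs{a}|$. The real-part relation gives $q^2=p^2-|\bs{a}|^2+|\bs{b}|^2>|\bs{b}|^2$, so $q>|\bs{b}|$. Hence
\begin{equation*}
\bs{a}\cdot\bs{b}=pq>|\bs{a}|\,|\bs{b}|\ge\bs{a}\cdot\bs{b}
\end{equation*}
by Cauchy--Schwarz, which is a contradiction. Therefore $p\le|\bs{a}|$, and so $\Ip Z^{1/2}=q\ge \bs{a}\cdot\bs{b}/|\bs{a}|$.

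The only point needing care is the branch bookkeeping: confirming that the convention \eqref{z1/2} really gives $q\ge0$ whenever $\Ip Z\ge0$, including the boundary case $\Ip Z=0$ with $\Rp Z<0$.
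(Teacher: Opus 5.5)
Your proof is correct and complete, including the branch bookkeeping. With the convention \eqref{z1/2} and the $\varepsilon\to0^+$ limit on the negative real axis, $\Ip Z=2\,\bs{a}\cdot\bs{b}\ge 0$ does force $q\ge 0$. When $\bs{a}\cdot\bs{b}>0$, the reduction of the claim to $p\le|\bs{a}|$ is valid, and your Cauchy--Schwarz contradiction argument establishes it.

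The paper gives no proof of this lemma; it simply quotes it from \cite[lemma 2]{CCZ2013}. So there is no in-paper argument to compare with, and your proof is a self-contained justification of a result the paper takes on citation.

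The key step $p\le|\bs{a}|$ can also be obtained directly, without contradiction. From \eqref{z1/2}, $p^2=(|Z|+\Rp Z)/2$. The triangle inequality gives $|Z|=\bigl|\sum_j z_j^2\bigr|\le\sum_j|z_j|^2=|\bs{a}|^2+|\bs{b}|^2$. Hence $p^2\le\bigl(|\bs{a}|^2+|\bs{b}|^2+|\bs{a}|^2-|\bs{b}|^2\bigr)/2=|\bs{a}|^2$.
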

For $x\in S_{r_0},y\in S_{r}$ with $r_0<r<r_2$, it follows from the stretched spherical coordinates 
\eqref{stretched_x} that $\wid{y} = (\wid{r}\sin\theta\cos\phi, \wid{r}\sin\theta\sin\phi, 
\wid{r}\cos\theta)^{\top}$ and $x = (r_0\sin\theta_0\cos\phi_0, r_0\sin\theta_0\sin\phi_0, r_0\cos\theta_0)^{\top}$.
Recalling the definition of the complex distance \eqref{complexdistance}, we have 
$\wid{d}(\wid{y},x)=(z_1^2+z_2^2+z_3^2)^{\frac{1}{2}}$, where
\begin{equation*}
\begin{aligned}
z_1&=r\sin\theta\cos\phi-r_0\sin\theta_0\cos\phi_0+{\bf i}r\sin\theta\cos\phi\sig_m(r)\triangleq a_1+{\bf i}b_1,\\
z_2&=r\sin\theta\sin\phi-r_0\sin\theta_0\sin\phi_0+{\bf i}r\sin\theta\sin\phi\sig_m(r)\triangleq a_2+{\bf i}b_2,\\
z_3&=r\cos\theta-r_0\cos\theta_0+{\bf i}r\cos\theta\sig_m(r)\triangleq a_3+{\bf i}b_3.
\end{aligned}
\end{equation*}
For the sake of clarity, we introduce the vectors $\vec{A}=r\bs{e}_r,\vec{B}=r_0\bs{e}_r$, 
and $\vec{C}=r_0\bs{e}_r^0$, where the unit vectors 
$\bs{e}_r=(\sin\theta\cos\phi,\sin\theta\sin\phi,\cos\theta)^{\top}$ and $\bs{e}_r^0
=(\sin\theta_0\cos\phi_0,\sin\theta_0\sin\phi_0,\cos\theta_0)^{\top}$; see Figure 
\ref{Cont-sig18} for the geometric configuration.
\begin{figure}[ht]
\begin{center}
\includegraphics[width=2.5in,height=2.5in]{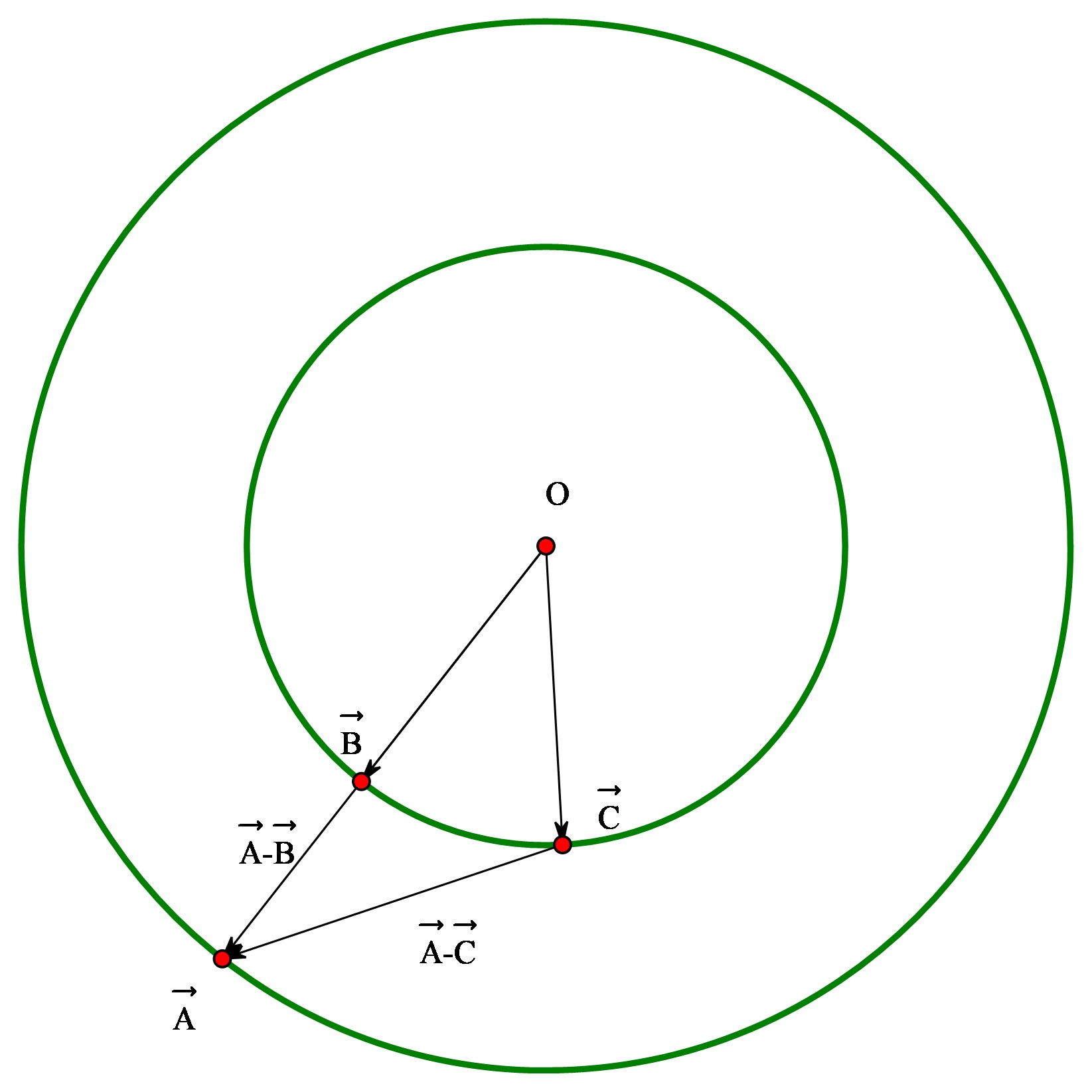}
\caption{Geometrical configuration of vectors $\vec{A},\vec{B},\vec{C}$.
}\label{Cont-sig18}
\end{center}
\end{figure}
Then we have         
\begin{equation*}
(\vec{A}-\vec{C})\cdot(\vec{A}-\vec{B}) = (r-r_0)(r - r_0\bs{e}_r\cdot\bs{e}_r^0),
\end{equation*}
hence
\begin{equation*}
\sum_{j=1}^3 a_j b_j = r\sig_m(r)\,\frac{(\vec{A}-\vec{C})\cdot(\vec{A}-\vec{B})}{r-r_0}
= r\sig_m(r)(r - r_0\bs{e}_r\cdot\bs{e}_r^0).
\end{equation*}
Since $\bs{e}_r\cdot\bs{e}_r^0\le 1$, we have $r - r_0\bs{e}_r\cdot\bs{e}_r^0\ge r-r_0$ 
and therefore
\begin{equation*}
\sum_{j=1}^3 a_j b_j \ge r\sig_m(r)(r-r_0).
\end{equation*}
This, combined with Lemma 6.1 yields
\begin{equation}\label{imd_lower}
\Ip\wid{d}(\wid{y},x)
\ge \frac{\sum_{j=1}^3 a_j b_j}{\sqrt{a_1^2+a_2^2+a_3^2}}
= \frac{\sum_{j=1}^3 a_j b_j}{|\vec{A}-\vec{C}|}\geq\frac{r\sig_m(r)(r-r_0)}{r_0+r}.
\end{equation}
Moreover, an estimate for the modulus of $\wid{d}(\wid{y},x)$ follows the triangle inequality
\begin{equation*}
|\wid{d}(\wid{y},x)|^2\le \sum_{j=1}^3(a_j^2+b_j^2)
\le |\vec{A}-\vec{C}|^2 + [r\sig_m(r)]^2 \le (r+r_0)^2 + r^2\sig_0^2.
\end{equation*}
Under the assumptions for parameters of the $\mathbf{u}-p$ system in Theorem \ref{wavenumber}, 
we have $\Rp k_j,\Ip k_j>0$ for $j=1,2,3$. This, combined the estimate \eqref{imd_lower} yields
$$ \begin{aligned}
\lvert e^{{\bf i}k_j\wid{d}(\wid{y},x)}\rvert
\leq \lvert e^{-\Rp k_j\Ip\wid{d}(\wid{y},x)}\rvert
\leq e^{-k_m\frac{r\sig_m(r)(r-r_0)}{r_0+r}},\quad x\in S_{r_0},y\in S_{r},
\end{aligned}$$
where $k_{m}=\min\{\Rp k_1,\Rp k_2,\Rp k_3\}$ (see \eqref{k1k2} for the definitions of $k_1,k_2,k_3$).
From the definition of PML extensions $\wid{E}(\bs{f})$ (see \eqref{E1E2_PML}), one then obtains 
the pointwise estimate
\begin{equation}\label{keypmlextensionpestimate}
\begin{aligned}
&|\wid{E}(\bs{f})|, |\nabla \wid{E}(\bs{f})|
\le C\frac{\wid{F}(r_0,r,\sig_0)}{\left[\frac{r\sig_m(r)(r-r_0)}{r_0+r}\right]^{m_0}}
e^{-k_{m}\frac{r\sig_m(r)(r-r_0)}{r_0+r}} \|\bs{f}\|_{H^{1/2}(S_{r_0})^4},
\end{aligned}
\end{equation}
where $m_0$ is a positive integer and
$\wid{F}(r_0,r,\sig_0)$ is defined by a polynomial $\wid{P}$
\begin{equation*}
\wid{F}(r_0,r,\sig_0)=\wid{P}\left(\sqrt{(r+r_0)^2+r^2\sig_0^2},r\sqrt{1+\sig_0^2},r_0,
\sqrt{1+[\sig_0+r\sig_{m}^{'}(r)]}\right).
\end{equation*}
The following lemma on the decay property of the PML extension $\wid{E}(\bs{f})$ will play an 
important role in the convergence analysis of the PML method.
\begin{lemma}\label{poroelasticPMLpotential}
For $\bs{f}\in H^{1/2}(S_{r_0})^4$, under the condition \eqref{wavenumbercondition} and for 
sufficiently small $\omega$, we have the following
estimate for the PML extension $\wid{E}(\bs{f})=(\wid{E}_1(\bs{f})^{\top},\wid{E}_2(\bs{f})^{\top})^{\top}$
\begin{equation}\label{poropmlfse}
\Vert \wid{E}(\bs{f})\Vert_{ H^{1/2}(S_{r_2})^6}
\leq C\frac{F(r_0,r,\sig_0,d)}{(\sig_0d)^{m}}
e^{-\frac{k_m\sig_0d}{8}}\Vert \bs{f} \Vert_{ H^{1/2}(S_{r_0})^4},
\end{equation} 
where $m$ is a positive integer and $F(r_0,\sig_0,d)$ is defined by a polynomial $P$
\begin{equation*}
F(r_0,\sig_0,d)=P\left(\sqrt{(2r_0+d)^2+(r_0+d)^2\sig_0^2},\sqrt{1+\sig_0^2},r_0,d,
\sqrt{1+[\sig_0+(r_0+d)\max\limits_{r\in B_{r_2}}\sig_{m}^{'}(r)]}\right).
\end{equation*}
\begin{proof}
Choose a smooth cut-off function $\chi$ satisfying $0\leq \chi(x)\leq 1$, $\chi(x)=0$ 
for $\lvert x \rvert <r_0+d/2$ and $\chi(x)=1$ for $\lvert x \rvert >r_1-d/4$. 
For example, an explicit construction is
\begin{equation*}
\chi(r) =
\begin{cases}
0, &\ds r\le r_0+\frac{d}{2},\\
\dfrac{h\Bigl(\frac{r-(r_0+d/2)}{d/4}\Bigr)}
{h\Bigl(\frac{r-(r_0+d/2)}{d/4}\Bigr)
+ k\,h\Bigl(\frac{(r_1-d/4)-r}{d/4}\Bigr)}, &\ds r_0+\frac{d}{2}<r<r_1-\frac{d}{4},\\
1, & r\ge\ds r_1-\frac{d}{4},
\end{cases}
\end{equation*}
with $h(\tau)=e^{-1/\tau}$ for $\tau>0$ and $h(\tau)=0$ for $\tau\le0$, and $k>0$ such that
$|\chi'(r)|\le C/d$. Consequently,
\begin{equation*}
\Bigl|\frac{x}{|x|}\cdot\nabla\chi(x)\Bigr|
= |\chi'(r)| \le \frac{C}{r}\quad\text{for }r_0+d/2<r=|x|<r_0+d.
\end{equation*}
This property guarantees that all terms involving $\nabla\chi$ remain polynomially bounded.
By trace theorem,
\begin{equation}\label{Ef_estimate}
\Vert \wid{E}(\bs{f})\Vert_{ H^{1/2}(S_{r_2})^6}\leq C \Vert\chi \wid{E}(\bs{f})
\Vert_{ H^{1}(\Om_2)^6}=C\Vert\chi \wid{E}(\bs{f})\Vert_{H^1\left(\Om_2\ba\ov{B_{r_0+d/2}}\right)^6}.
\end{equation}
For a generic component $v$ of $\wid{E}(\bs{f})$ in $D=\Om_2\ba\ov{B_{r_0+d/2}}$ we have
\begin{equation}\label{H1normestimate}
\|\chi v\|_{H^1(D)}^2
= \|\chi v\|_{L^2(D)}^2 + \|\nabla(\chi v)\|_{L^2(D)^3}^2
\le \|\chi v\|_{L^2(D)}^2 + 2\bigl(\|v\nabla\chi\|_{L^2(D)^3}^2 + \|\chi\nabla v\|_{L^2(D)^3}^2\bigr).
\end{equation}
The rightmost term of the  estimate \eqref{H1normestimate} can be expressed explicitly in Spherical coordinate
$$
\begin{aligned}
&\|v\nabla\chi\|_{L^2(D)^3}^2 + \|\chi\nabla v\|_{L^2(D)^3}^2\\
=&\int_{r_0+\frac{d}{2}}^{r_0+d}\int_{0}^{\pi}\int_{0}^{2 \pi}
\Bigg[\Big\vert v\chi'(r)\Big\vert^2+\Bigg(\Big\lvert \frac{\pa v }{\pa r}\chi\Big\rvert^2
+\Big\lvert\frac{1}{r} \frac{\pa v }{\pa\theta}\chi\Big\rvert^2+\Big\lvert \frac{1}{r\sin\theta}
\frac{\pa v }{\pa \phi}\chi\Big\rvert^2\Bigg)\Bigg]r^2\sin\theta d\phi d\theta dr.
\end{aligned}$$
For $r\in (r_0+d/2,r_0+d)$,  from the definition of $\sig_m$ (see \eqref{sigma_m}), we know
$\sig_m$ is increasing and $\sig_m(r_0+d/2)\ge \sig_0/2$. Hence
\begin{equation*}
\frac{r\sig_m(r)(r-r_0)}{r_0+r}
\ge \frac{(r_0+d/2)(\sig_0/2)(d/2)}{2r_0+d} = \frac{\sig_0d}{8}.
\end{equation*}
This, combined with \eqref{keypmlextensionpestimate} yields
\begin{equation}\label{v_nablav}
|v(x)|,\; |\nabla v(x)|
\le C\frac{\wid{F}(r_0,r,\sig_0)}{\left[\frac{r\sig_m(r)(r-r_0)}{r_0+r}\right]^{m_0}}
e^{-\frac{k_m\sig_0d}{8}}
\|\bs{f}\|_{H^{1/2}(S_{r_0})^4},
\end{equation}
where $\wid{F}(r_0,r,\sig_0)$ denotes a polynomial with positive coefficients.
Combining \eqref{Ef_estimate}-\eqref{v_nablav} yields the desired estimate \eqref{poropmlfse}. 
The proof is thus complete.
\end{proof}
\end{lemma}
Given data $\bs{f}=(\bs{f}_s^{\top},f_4)^{\top}\in H^{1/2}(S_{r_0})^4$ with 
$\bs{f}_s=(f_1,f_2,f_3)^{\top}\in H^{1/2}(S_{r_0})^3$, consider the PML layer system
\begin{equation}\label{poromodifiedPMLlayer}
\begin{cases}
\omega^2(\rho\wid{\mathbf{u}} +\rho_f \wid{\mathbf{w}})+\wid{\De_c^*}\wid{\mathbf{u}} 
+\alpha M\wid{\nabla} \wid{\nabla}\cdot  \wid{\mathbf{w}}=0\quad {\rm in}\;\;\Om_{\PML},\\
\omega^2(\rho_f \wid{\mathbf{u}}+m \wid{\mathbf{w}})+{\bf{i}}\omega b \wid{\mathbf{w}}+
\wid{\nabla} \wid{\nabla} \cdot [\alpha M \wid{\mathbf{u}}+ M \wid{\mathbf{w}}]
=0 \quad {\rm in}\;\;\Om_{\PML},\\
(\wid{\mathbf{u}}^{\top},	\bs{n}\cdot\wid{\mathbf{w}})^{\top}
=(\bs{f}_s ^{\top},f_4)^{\top}\quad {\rm on} \;\;S_{r_0},\\
(\wid{\mathbf{u}}^{\top},	\bs{n}\cdot\wid{\mathbf{w}})^{\top}
=0\quad {\rm on} \;\;S_{r_2}.
\end{cases}
\end{equation} 
The well-posedness of the system \eqref{poromodifiedPMLlayer} can be similarly obtained 
following Theorem \ref{pmllayer}.
We can define the associated Dirichlet-to-Neumann operator 
$\hat{\mathcal{N}}:H^{1/2}(S_{r_0})^4\to H^{-1/2}(S_{r_0})^4$:
$$ \hat{\mathcal{N}}\bs{f}=\begin{bmatrix}
\mathcal{T}_c & \alpha M \bs{n} \nabla^{\top} \\
\alpha M \nabla^{\top} &  M \nabla^{\top}
\end{bmatrix} (\wid{\mathbf{u}}^{\top},\wid{\mathbf{w}}^{\top})^{\top}. $$
Based on Lemma \ref{poroelasticPMLpotential}, we have the error estimate between DtN operators
$\mathcal{N}$ and $\hat{\mathcal{N}}$.
\begin{theorem}
Given $\bs{f}=(\bs{f}_s^{\top},f_4)^{\top}\in H^{1/2}(S_{r_0})^4$ with $\bs{f}_s=(f_1,f_2,f_3)^{\top}$, we have
\begin{equation}\label{porodtn_error}
\Vert (\mathcal{N}-\hat{\mathcal{N}})\bs{f}\Vert_{{ H^{-1/2}( S_{r_0})}^4}
\leq C\Vert \wid{E}(\bs{f})\Vert_{{ H^{1/2}( S_{r_2})}^6} .
\end{equation}
\begin{proof}
Let $\wid{E}(\bs{f})=(\wid{E}_1(\bs{f})^{\top},\wid{E}_2(\bs{f})^{\top})^{\top}$ 
be the PML extension potential, and $\wid{V}$ be the solution of the system 
\eqref{poromodifiedPMLlayer}.
Then 
$$(\mathcal{N}-\hat{\mathcal{N}})\bs{f}=\begin{bmatrix}
\mathcal{T}_c & \alpha M \bs{n} \nabla^{\top} \\
\alpha M \nabla^{\top} &  M \nabla^{\top}
\end{bmatrix}\wid{W}|_{S_{r_0}},$$
where $\wid{W}=\wid{E}(\bs{f})-\wid{V}$ satisfies the system
\begin{equation}\label{poromodifiedPMLlayeroutb}
\begin{cases}
\omega^2(\rho\wid{\mathbf{u}} +\rho_f \wid{\mathbf{w}})+\wid{\De_c^*}\wid{\mathbf{u}} 
+\alpha M\wid{\nabla} \wid{\nabla}\cdot  \wid{\mathbf{w}}=0\quad {\rm in}\;\;\Om_{\PML},\\
\omega^2(\rho_f \wid{\mathbf{u}}+m \wid{\mathbf{w}})+{\bf{i}}\omega b \wid{\mathbf{w}}
+\wid{\nabla} \wid{\nabla} \cdot [\alpha M \wid{\mathbf{u}}+ M \wid{\mathbf{w}}]
=0 \quad {\rm in}\;\;\Om_{\PML},\\
(\wid{\mathbf{u}}^{\top},{\bs n}\cdot \wid{\mathbf{w}})^{\top}=0\quad {\rm on} \;\;S_{r_0},\\
(\wid{\mathbf{u}}^{\top},{\bs n}\cdot \wid{\mathbf{w}})^{\top} =(\wid{E}_1(\bs{f})^{\top},
{\bs n}\cdot\wid{E}_2(\bs{f})^{\top})^{\top}\quad {\rm on} \;\;S_{r_2}.
\end{cases}
\end{equation} 
By Theorem \ref{pmllayer}, the following stability estimate 
\begin{equation}\label{W_stability}
\Vert \wid{W} \Vert_{H^1(\Om_{\PML})^6}\leq C(d) \Vert (\wid{E}_1(\bs{f})^{\top},{\bs n}
\cdot\wid{E}_2(\bs{f})^{\top})^{\top}\Vert_{{ H^{1/2}(S_{r_2})}^4} \leq C(d) 
\Vert \wid{E}(\bs{f})\Vert_{{ H^{1/2}(S_{r_2})}^6}
\end{equation}
therefore holds, where the constant $C(d)$ depends on the thickness $d$ and grows at most polynomially in $d$.

Let $\Psi \in H^{1/2}( S_{r_0})^3$ be extended to a function in $H^{1}(\Om_{\PML})^{3}$ 
(still denoted by $\Psi$) such that $\Psi = 0$ on $S_{r_2}$ and $\Vert \Psi\Vert_{{ H^{1/2}( S_{r_0})}^3}
\geq \Vert\Psi \Vert_{{ H^{1}( \Om_{\PML})}^3}$. Let $g\in H^{1/2}( S_{r_0})$ be extended to a function  
$\Phi\in H^{1}(\Om_{\PML})^3$  such that $\Phi=0$ on the $ S_{r_2}$, ${\bs n}\cdot \Phi=g$ on the 
$ S_{r_0}$ and $C\Vert g\Vert_{{ H^{1/2}( S_{r_0})}}\geq \Vert\Phi \Vert_{{ H^{1}( \Om_{\PML})}^3}$.
Then it follows from the integration by parts and \eqref{W_stability} that
\begin{equation}
\begin{aligned}
\lvert \langle(\hat{\mathcal{N}}-\mathcal{N})\bs{f},(\Psi^{\top},g)^{\top}\rangle_{ S_{r_0}}\rvert
=&\lvert \mathcal{B}_{\PML}(\wid{W},(\Psi^{\top},\Phi^{\top})^{\top})\rvert\\
\leq &C \Vert \wid{W}\Vert_{H^1(\Om_{\PML})^6}\Vert (\Psi^{\top},\Phi^{\top})^{\top} \Vert_{H^1(\Om_{\PML})^6}\\
\leq& C(d)\Vert \wid{E}(\bs{f})\Vert_{{ H^{1/2}( S_{r_2})}^6}\Vert(\Psi^{\top},g)^{\top}\Vert_{{ H^{1/2}( S_{r_0})}^4},
\end{aligned} 
\end{equation}
where the sesquilinear form $\mathcal{B}_{\PML}(\cdot,\cdot)$ is defined in \eqref{poropmlbiform_B} 
for $D=\Om_{\PML}$. The it follows from the definition of $H^{-1/2}(S_{r_0})^4$ that 
\begin{equation*}
\begin{aligned}
&\Vert (\mathcal{N}-\hat{\mathcal{N}})\bs{f}\Vert_{{ H^{-1/2}( S_{r_0})}^4}\\
=& \sup\limits_{(\Psi^{\top},g)^{\top} \in H^{1/2}(\pa B_{0})^{4}\ba\{0\}}\frac{\lvert
\langle(\mathcal{N}-\hat{\mathcal{N}})\bs{f},(\Psi^{\top},g)^{\top}\rangle_{S_{r_0}}\rvert}
{\Vert (\Psi^{\top},g)^{\top}\Vert_{{ H^{1/2}(S_{r_0})}^4}}
\leq C(d)\Vert \wid{E}(\bs{f})\Vert_{{ H^{1/2}( S_{r_2})}^6}.
\end{aligned}
\end{equation*}
This, combined with Lemma \ref{poroelasticPMLpotential} arrive at the desired estimate \eqref{porodtn_error}.
\end{proof}

\end{theorem} 
Now we can establish the main result on the exponential convergence of the PML method.
\begin{theorem}[Convergence Theorem] For given 
${\bf Q} \in H^1(\Om^c)^3$ with compact support inside $B_{r_0}$, let $(\mathbf{u}^{\top},
\mathbf{w}^{\top})^{\top}\in V_{\pa\Om}(\Om_0)$ and $(\wid{\mathbf{u}}^{\top},
\wid{\mathbf{w}}^{\top})^{\top}\in H_0^1(\Om_2)^3\times V(\Om_2)$ be the solutions of the 
problem \eqref{sporoelastic1_1}-\eqref{sDBC2_1} and  \eqref{poromodifiedPML}, respectively. 
Then under the condition \eqref{wavenumbercondition} and the smallness assumption on $\alpha$ 
and $\omega$, the following error estimate holds:
\begin{equation}\label{porosolution_err}
\Vert (\wid{\mathbf{u}}^{\top},\wid{\mathbf{w}}^{\top})^{\top}-(\mathbf{u}^{\top},
\mathbf{w}^{\top})^{\top} \Vert_{H^1(\Om_0)^6} \leq  C\frac{F(r_0,\sig_0,d)}{(\sig_0d)^{m}}
e^{-\frac{k_m\sig_0d}{8}}\Vert (\wid{\mathbf{u}}^{\top},\bs{n}\cdot\wid{\mathbf{w}})^{\top} 
\Vert_{ H^{1/2}(S_{r_0})^4},
\end{equation}
where $m$ is a positive integer and $F(r_0,\sig_0,d)$ is defined by a polynomial $P$
\begin{equation*}
F(r_0,\sig_0,d)=P\left(\sqrt{(2r_0+d)^2+(r_0+d)^2\sig_0^2},\sqrt{1+\sig_0^2},r_0,d,
\sqrt{1+[\sig_0+(r_0+d)\max\limits_{r\in B_{r_2}}\sig_{m}^{'}(r)]}\right).
\end{equation*}
\begin{proof} 
Noting that ${\bf Q} \in H^1(\Om^c)^3$ is compactly supported inside $B_{r_0}$, then 
$(\mathbf{u}^{\top},\mathbf{w}^{\top})^{\top}$ and 
$(\wid{\mathbf{u}}^{\top},\wid{\mathbf{w}}^{\top})^{\top}$ satisfy the following variational problems
\begin{align}\label{scattering}
\mathcal{B}((\mathbf{u}^{\top},\mathbf{w}^{\top})^{\top},\Psi)&=-\int_{\Om_0}{\bf Q}\cdot \ov{\Psi}dx,
\quad \forall\;\Psi \in V_{\pa \Om}(\Om_0), \\ \label{pmlproblem}
\mathcal{B}_{\Om_2}((\wid{\mathbf{u}}^{\top},\wid{\mathbf{w}}^{\top})^{\top},\Psi)&
=-\int_{\Om_0}{\bf Q}\cdot \ov{\Psi}dx,
\quad \forall\;\Psi=(\Psi^s,\Psi^f)\in H_0^1(\Om_2)^3\times V(\Om_2).
\end{align}
Simple decomposition of $\mathcal{B}_{\Om_2}$ and integration by parts imply
\begin{equation}\label{B_decom}
\begin{aligned}
\mathcal{B}_{\Om_2}((\wid{\mathbf{u}}^{\top},\wid{\mathbf{w}}^{\top})^{\top},\Psi)
&= \mathcal{B}((\wid{\mathbf{u}}^{\top},\wid{\mathbf{w}}^{\top})^{\top},\Psi)+
\langle\mathcal{N}\wid{U},\Psi_{\bs{n}}\rangle_{S_{r_0}}+\mathcal{B}_{\Om_{\PML}}
((\wid{\mathbf{u}}^{\top},\wid{\mathbf{w}}^{\top})^{\top},\Psi)\\&
=\mathcal{B}((\wid{\mathbf{u}}^{\top},\wid{\mathbf{w}}^{\top})^{\top},\Psi)
+\langle\mathcal{N}\wid{U},\Psi_{\bs{n}}\rangle_{S_{r_0}}-
\langle\hat{\mathcal{N}}\wid{U},\Psi_{\bs{n}}\rangle_{S_{r_0}},
\end{aligned} 
\end{equation}
where $$ \wid{U}=\begin{bmatrix}
\wid{\mathbf{u}} \\
\bs{n}\cdot \wid{\mathbf{w}} 	
\end{bmatrix}\in H^{1/2}(S_{r_0})^4,\quad \Psi_{\bs{n}}=\begin{bmatrix}
\Psi^s \\
\bs{n}\cdot \Psi^f 
\end{bmatrix}\in H^{1/2}(S_{r_0})^4. $$
Subtracting the equations \eqref{scattering} and \eqref{pmlproblem} and using \eqref{B_decom} gives
\begin{equation}
\begin{aligned}
\mathcal{B}((\mathbf{u}^{\top},\mathbf{w}^{\top})^{\top}-(\wid{\mathbf{u}}^{\top},
\wid{\mathbf{w}}^{\top})^{\top},\Psi)=\langle (\mathcal{N}-\hat{\mathcal{N}})\wid{U},
\Psi_{\bs{n}}\rangle_{S_{r_0}}
\end{aligned} 
\end{equation}
This, combined with the trace theorem gives
\begin{equation}
\begin{aligned}
\lvert \mathcal{B}((\mathbf{u}^{\top},\mathbf{w}^{\top})^{\top}-(\wid{\mathbf{u}}^{\top},
\wid{\mathbf{w}}^{\top})^{\top},\Psi)\rvert&\leq C\Vert (\mathcal{N}-\hat{\mathcal{N}})
\wid{U}\Vert_{ H^{-1/2}(S_{r_0})^4}\Vert\Psi _{\bs{n}}\Vert_{H^{1/2}(S_{r_0})^4}.
\end{aligned} 
\end{equation}
which obviously implies
\begin{equation}\label{eq:label}
\frac{\lvert \mathcal{B}((\mathbf{u}^{\top},\mathbf{w}^{\top})^{\top}-
(\wid{\mathbf{u}}^{\top},\wid{\mathbf{w}}^{\top})^{\top},\Psi)\rvert}{\Vert\Psi \Vert_{H^{1}(\Om_0)^6}}
\leq C\Vert (\mathcal{N}-\hat{\mathcal{N}})\wid{U}\Vert_{ H^{-1/2}(S_{r_0})^4}.
\end{equation}
The desired error estimate \eqref{porosolution_err} follows from Lemma \ref{porodtn_error}
and the inf-sup condition \eqref{infsup_B1} for the sesquilinear form $ \mathcal{B}$.
\end{proof} 
\end{theorem}

\section{Conclusion}\label{sec:conclusion}

In this paper, we have studied three-dimensional time-harmonic poroelastic scattering problems.
We first introduced an equivalent $\mathbf{u}-p$ system with fewer degrees of freedom, derived 
the associated fundamental solution and Green's identity, and proved that under reasonable parameter 
constraints the real and imaginary parts of all three wave numbers are positive. Under certain assumptions,
 we proved the well-posedness of the PML problems in the truncated domain and in the layer.
Moreover, the exponential convergence of the spherical PML method was established in terms of the 
layer thickness and the absorbing parameters. The proof relies on an error estimate between the 
Dirichlet-to-Neumann (DtN) operators of the original scattering problem and the truncated PML problem, 
which in turn follows from the exponential decay of the PML extensions of the stretched fundamental solution.

Our approach can be extended to other poroelastic scattering models and to more general boundary conditions 
on the truncated PML boundary, such as the mixed boundary conditions. It would also be of interest to study 
the uniaxial PML method for time-harmonic poroelastic scattering problems. We hope to report such results in the future.

\section*{Acknowledgements}

This work is partly supported by the Fundamental Research Funds for the Central Universities 2026JBMC043, 
and the NNSF of China grants 12201033 and 12431016.

\begin{appendices}
\section{The fundamental solution of an abstract Biot system}
\label{app:aBiot_fs}
The matrix form of the abstract Biot's system is given by
\begin{equation*}
B(\mathbf{u}^{\top},p)^{\top}=0,\quad{\rm where}\;B=\begin{bmatrix}
\Delta^*+T_1\mat{I}_3 & T_2\nabla \\
T_3\nabla^{\top} & \Delta +T_4\\
\end{bmatrix}.
\end{equation*}
By direct calculation, the elements of the adjoint matrix of $B$ are:
\begin{equation*}
\begin{aligned} 
{\rm adj}B_{11}= &\;(T_1+\mu\Delta)\left[T_1T_4+[T_4(\la+2\mu)-T_2T_3]\left(\frac{\pa ^2}
{\pa x_2^2}+\frac{\pa ^2}{\pa x_3^2}\right)+T_4\mu\frac{\pa ^2}{\pa x_1^2}\right.\\
&\left.\qquad\qquad\quad+\mu \frac{\pa ^2}{\pa x_1^2}\Delta +(\la+2\mu)\left(\frac{\pa ^2}
{\pa x_2^2}+\frac{\pa ^2}{\pa x_3^2}\right)\Delta+T_1\Delta\right],\\
{\rm adj}B_{12}=&\;(T_1+\mu\Delta) \frac{\pa ^2}{\pa x_1\pa x_2}[T_2T_3-(\la +\mu)T_4-(\la +\mu)\Delta],\\
{\rm adj}B_{13}=&\;(T_1+\mu\Delta) \frac{\pa ^2}{\pa x_1\pa x_3}[T_2T_3-(\la +\mu)T_4-(\la +\mu)\Delta],\\
{\rm adj}B_{14}=&\;-T_2 \frac{\pa }{\pa x_1}(T_1+\mu\Delta)^2,\\
{\rm adj}B_{21}=&\;(T_1+\mu\Delta) \frac{\pa ^2}{\pa x_2\pa x_1}[T_2T_3-(\la +\mu)T_4-(\la +\mu)\Delta],\\
\end{aligned}
\end{equation*}
\begin{equation*}
\begin{aligned}
{\rm adj}B_{22}= &\;(T_1+\mu\Delta)\left[T_1T_4+[T_4(\la+2\mu)-T_2T_3]\left(\frac{\pa ^2}
{\pa x_1^2}+\frac{\pa ^2}{\pa x_3^2}\right)+T_4\mu\frac{\pa ^2}{\pa x_2^2}\right.\\
&\left.\qquad\qquad\quad+\mu \frac{\pa ^2}{\pa x_2^2}\Delta +(\la+2\mu)\left(\frac{\pa ^2}
{\pa x_1^2}+\frac{\pa ^2}{\pa x_3^2}\right)\Delta+T_1\Delta\right],\\
{\rm adj}B_{23}=&\;(T_1+\mu\Delta) \frac{\pa ^2}{\pa x_2\pa x_3}[T_2T_3-(\la +\mu)T_4-(\la +\mu)\Delta],\\
{\rm adj}B_{24}=&\;-T_2\frac{\pa }{\pa x_2}(T_1+\mu\Delta)^2, \\
{\rm adj}B_{31}=&\;(T_1+\mu\Delta) \frac{\pa ^2}{\pa x_3\pa x_1}[T_2T_3-(\la +\mu)T_4-(\la +\mu)\Delta],\\
{\rm adj}B_{32}=&\;(T_1+\mu\Delta) \frac{\pa ^2}{\pa x_3\pa x_2}[T_2T_3-(\la +\mu)T_4-(\la +\mu)\Delta],\\
{\rm adj}B_{33}= &\;(T_1+\mu\Delta)\left[T_1T_4+[T_4(\la+2\mu)-T_2T_3]\left(\frac{\pa ^2}{\pa x_1^2}
+\frac{\pa ^2}{\pa x_2^2}\right)+T_4\mu\frac{\pa ^2}{\pa x_3^2}\right.\\
&\left.\qquad\qquad\quad+\mu \frac{\pa ^2}{\pa x_3^2}\Delta +(\la+2\mu)\left(\frac{\pa ^2}{\pa x_1^2}
+\frac{\pa ^2}{\pa x_2^2}\right)\Delta+T_1\Delta\right],\\
{\rm adj}B_{34}=&\;-T_2\frac{\pa }{\pa x_3}(T_1+\mu\Delta)^2, \quad
{\rm adj}B_{41}=\;-T_3 \frac{\pa }{\pa x_1}(T_1+\mu\Delta)^2,\\
{\rm adj}B_{42}=&\;-T_3 \frac{\pa }{\pa x_2}(T_1+\mu\Delta)^2,\quad
{\rm adj}B_{43}=\;-T_3\frac{\pa }{\pa x_3}(T_1+\mu\Delta)^2,\\
{\rm adj}B_{44}=&\;(T_1+\mu\Delta)^2 [T_1+(\la+2\mu)\Delta].
\end{aligned}
\end{equation*}
The elements of the fundamental solution matrix for the above abstract Biot system  are 
(with the singularity at the origin):
\begin{equation*}
\begin{aligned} 
\Phi_{11}(x)
=&\;\frac{1}{\mu(\la+2\mu)}\left\{\left[T_1T_4+[T_4(\la+2\mu)-T_2T_3]\left(\frac{\pa ^2}{\pa x_2^2}
+\frac{\pa ^2}{\pa x_3^2}\right)+T_4\mu\frac{\pa ^2}{\pa x_1^2}\right]\right.\\
&\left(A_1\frac{e^{{\bf i}k_1\lvert x \rvert}}{4\pi\lvert x \rvert}+A_2\frac{e^{{\bf i}k_2\lvert x 
\rvert}}{4\pi\lvert x \rvert}+A_3\frac{e^{{\bf i}k_3\lvert x \rvert}}{4\pi\lvert x \rvert}\right)
-\Bigg[\mu \frac{\pa ^2}{\pa x_1^2} +(\la+2\mu)\left(\frac{\pa ^2}{\pa x_2^2}+\frac{\pa ^2}
{\pa x_3^2}\right)+T_1\Bigg]\\
&\left.\left(k_1^2A_1\frac{e^{{\bf i}k_1\lvert x \rvert}}{4\pi\lvert x \rvert}+k_2^2A_2
\frac{e^{{\bf i}k_2\lvert x \rvert}}{4\pi\lvert x \rvert}+k_3^2A_3\frac{e^{{\bf i}k_3\lvert x \rvert}}
{4\pi\lvert x \rvert}\right)\right\},\\
\Phi_{12}(x)
=&\;\frac{T_2T_3-(\la +\mu)T_4}{\mu(\la+2\mu)}\frac{\pa ^2}{\pa x_1\pa x_2}\left(A_1
\frac{e^{{\bf i}k_1\lvert x \rvert}}{4\pi\lvert x \rvert}+A_2\frac{e^{{\bf i}k_2
\lvert x \rvert}}{4\pi\lvert x \rvert}+A_3\frac{e^{{\bf i}k_3\lvert x \rvert}}
{4\pi\lvert x \rvert}\right)\\
&+   \frac{\la +\mu}{\mu(\la+2\mu)}\frac{\pa ^2}{\pa x_1\pa x_2}\left(k_1^2A_1\frac{e^{{\bf i}
k_1\lvert x \rvert}}{4\pi\lvert x \rvert}+k_2^2A_2\frac{e^{{\bf i}k_2\lvert x \rvert}}
{4\pi\lvert x \rvert}+k_3^2A_3\frac{e^{{\bf i}k_3\lvert x \rvert}}{4\pi\lvert x \rvert}\right),\\
\Phi_{13}(x)
=&\;\frac{T_2T_3-(\la +\mu)T_4}{\mu(\la+2\mu)} \frac{\pa ^2}{\pa x_1\pa x_3}
\left(A_1\frac{e^{{\bf i}k_1\lvert x \rvert}}{4\pi\lvert x \rvert}+A_2\frac{e^{{\bf i}
k_2\lvert x \rvert}}{4\pi\lvert x \rvert}+A_3\frac{e^{{\bf i}k_3\lvert x \rvert}}
{4\pi\lvert x \rvert}\right)\\
&+  \frac{\la +\mu}{\mu(\la+2\mu)} \frac{\pa ^2}{\pa x_1\pa x_3}\left(k_1^2A_1
\frac{e^{{\bf i}k_1\lvert x \rvert}}{4\pi\lvert x \rvert}+k_2^2A_2\frac{e^{{\bf i}
k_2\lvert x \rvert}}{4\pi\lvert x \rvert}+k_3^2A_3\frac{e^{{\bf i}k_3\lvert x \rvert}}
{4\pi\lvert x \rvert}\right),
\end{aligned}
\end{equation*}
\begin{equation*}
\begin{aligned} 
\Phi_{14}(x)
=&\;-\frac{T_1T_2}{\mu(\la+2\mu)}  \frac{\pa }{\pa x_1}\left(A_1\frac{e^{{\bf i}k_1
\lvert x \rvert}}{4\pi\lvert x \rvert}+A_2\frac{e^{{\bf i}k_2\lvert x \rvert}}
{4\pi\lvert x \rvert}+A_3\frac{e^{{\bf i}k_3\lvert x \rvert}}{4\pi\lvert x \rvert}\right)\\
&+\frac{T_2}{\la+2\mu} \frac{\pa }{\pa x_1}\left(k_1^2A_1\frac{e^{{\bf i}k_1\lvert x 
\rvert}}{4\pi\lvert x \rvert}+k_2^2A_2\frac{e^{{\bf i}k_2\lvert x \rvert}}{4\pi\lvert x 
\rvert}+k_3^2A_3\frac{e^{{\bf i}k_3\lvert x \rvert}}{4\pi\lvert x \rvert}\right),\\
\Phi_{21}(x)
=&\;\frac{T_2T_3-(\la +\mu)T_4}{\mu(\la+2\mu)} \frac{\pa ^2}{\pa x_2\pa x_1}\left(A_1
\frac{e^{{\bf i}k_1\lvert x \rvert}}{4\pi\lvert x \rvert}+A_2\frac{e^{{\bf i}k_2\lvert 
x \rvert}}{4\pi\lvert x \rvert}+A_3\frac{e^{{\bf i}k_3\lvert x \rvert}}{4\pi\lvert x \rvert}\right)\\
&+\frac{\la +\mu}{\mu(\la+2\mu)} \frac{\pa ^2}{\pa x_2\pa x_1}\left(k_1^2A_1\frac{e^{{\bf i}
k_1\lvert x \rvert}}{4\pi\lvert x \rvert}+k_2^2A_2\frac{e^{{\bf i}k_2\lvert x \rvert}}
{4\pi\lvert x \rvert}+k_3^2A_3\frac{e^{{\bf i}k_3\lvert x \rvert}}{4\pi\lvert x \rvert}\right),\\
\Phi_{22}(x)=&\;\frac{1}{\mu(\la+2\mu)}\left\{\left[T_1T_4+[T_4(\la+2\mu)-T_2T_3]
\left(\frac{\pa ^2}{\pa x_1^2}+\frac{\pa ^2}{\pa x_3^2}\right)+T_4\mu\frac{\pa ^2}{\pa x_2^2}\right]\right.\\
&\left(A_1\frac{e^{{\bf i}k_1\lvert x \rvert}}{4\pi\lvert x \rvert}+A_2\frac{e^{{\bf i}k_2
\lvert x \rvert}}{4\pi\lvert x \rvert}+A_3\frac{e^{{\bf i}k_3\lvert x \rvert}}{4\pi\lvert x
 \rvert}\right)-\Bigg[\mu \frac{\pa ^2}{\pa x_2^2} +(\la+2\mu)\left(\frac{\pa ^2}{\pa x_1^2}
 +\frac{\pa ^2}{\pa x_3^2}\right)+T_1\Bigg]\\
&\left.\left(k_1^2A_1\frac{e^{{\bf i}k_1\lvert x \rvert}}{4\pi\lvert x \rvert}+k_2^2A_2
\frac{e^{{\bf i}k_2\lvert x \rvert}}{4\pi\lvert x \rvert}+k_3^2A_3\frac{e^{{\bf i}k_3\lvert 
x \rvert}}{4\pi\lvert x \rvert}\right)\right\},\\
\Phi_{23}(x)
=&\;\frac{T_2T_3-(\la +\mu)T_4}{\mu(\la+2\mu)} \frac{\pa ^2}{\pa x_2\pa x_3}\left(A_1
\frac{e^{{\bf i}k_1\lvert x \rvert}}{4\pi\lvert x \rvert}+A_2\frac{e^{{\bf i}k_2\lvert x \rvert}}{4\pi\lvert x \rvert}+A_3\frac{e^{{\bf i}k_3\lvert x \rvert}}{4\pi\lvert x \rvert}\right)\\
&+\frac{\la +\mu}{\mu(\la+2\mu)} \frac{\pa ^2}{\pa x_2\pa x_3}\left(k_1^2A_1
\frac{e^{{\bf i}k_1\lvert x \rvert}}{4\pi\lvert x \rvert}+k_2^2A_2\frac{e^{{\bf i}
k_2\lvert x \rvert}}{4\pi\lvert x \rvert}+k_3^2A_3\frac{e^{{\bf i}k_3\lvert x \rvert}}
{4\pi\lvert x \rvert}\right),\\
\Phi_{24}(x)
=&\;-\frac{T_1T_2}{\mu(\la+2\mu)}  \frac{\pa }{\pa x_2}\left(A_1\frac{e^{{\bf i}k_1\lvert x 
\rvert}}{4\pi\lvert x \rvert}+A_2\frac{e^{{\bf i}k_2\lvert x \rvert}}{4\pi\lvert x \rvert}+
A_3\frac{e^{{\bf i}k_3\lvert x \rvert}}{4\pi\lvert x \rvert}\right)\\
&+\frac{T_2}{\la+2\mu} \frac{\pa }{\pa x_2}\left(k_1^2A_1\frac{e^{{\bf i}k_1\lvert x \rvert}}
{4\pi\lvert x \rvert}+k_2^2A_2\frac{e^{{\bf i}k_2\lvert x \rvert}}{4\pi\lvert x \rvert}+k_3^2A_3
\frac{e^{{\bf i}k_3\lvert x \rvert}}{4\pi\lvert x \rvert}\right),\\
\Phi_{31}(x)
=&\;\frac{T_2T_3-(\la +\mu)T_4}{\mu(\la+2\mu)} \frac{\pa ^2}{\pa x_3\pa x_1}\left(A_1
\frac{e^{{\bf i}k_1\lvert x \rvert}}{4\pi\lvert x \rvert}+A_2\frac{e^{{\bf i}k_2\lvert x \rvert}}
{4\pi\lvert x \rvert}+A_3\frac{e^{{\bf i}k_3\lvert x \rvert}}{4\pi\lvert x \rvert}\right)\\
&+\frac{\la +\mu}{\mu(\la+2\mu)} \frac{\pa ^2}{\pa x_3\pa x_1}\left(k_1^2A_1\frac{e^{{\bf i}k_1
\lvert x \rvert}}{4\pi\lvert x \rvert}+k_2^2A_2\frac{e^{{\bf i}k_2\lvert x \rvert}}{4\pi\lvert x 
\rvert}+k_3^2A_3\frac{e^{{\bf i}k_3\lvert x \rvert}}{4\pi\lvert x \rvert}\right),\\
\Phi_{32}(x)
=&\;\frac{T_2T_3-(\la +\mu)T_4}{\mu(\la+2\mu)} \frac{\pa ^2}{\pa x_3\pa x_2}\left(A_1\frac{e^{{\bf i}
k_1\lvert x \rvert}}{4\pi\lvert x \rvert}+A_2\frac{e^{{\bf i}k_2\lvert x \rvert}}{4\pi\lvert x \rvert}
+A_3\frac{e^{{\bf i}k_3\lvert x \rvert}}{4\pi\lvert x \rvert}\right)\\
&+\frac{\la +\mu}{\mu(\la+2\mu)} \frac{\pa ^2}{\pa x_3\pa x_2}\left(k_1^2A_1\frac{e^{{\bf i}k_1\lvert 
x \rvert}}{4\pi\lvert x \rvert}+k_2^2A_2\frac{e^{{\bf i}k_2\lvert x \rvert}}{4\pi\lvert x \rvert}
+k_3^2A_3\frac{e^{{\bf i}k_3\lvert x \rvert}}{4\pi\lvert x \rvert}\right),\\
\Phi_{33}(x)=&\;\frac{1}{\mu(\la+2\mu)}\left\{\left[T_1T_4+[T_4(\la+2\mu)-T_2T_3]\left(\frac{\pa ^2}
{\pa x_1^2}+\frac{\pa ^2}{\pa x_2^2}\right)+T_4\mu\frac{\pa ^2}{\pa x_3^2}\right]\right.\\
&\left(A_1\frac{e^{{\bf i}k_1\lvert x \rvert}}{4\pi\lvert x \rvert}+A_2\frac{e^{{\bf i}k_2\lvert x 
\rvert}}{4\pi\lvert x \rvert}+A_3\frac{e^{{\bf i}k_3\lvert x \rvert}}{4\pi\lvert x \rvert}\right)-
\Bigg[\mu \frac{\pa ^2}{\pa x_3^2} +(\la+2\mu)\left(\frac{\pa ^2}{\pa x_1^2}+\frac{\pa ^2}
{\pa x_2^2}\right)+T_1\Bigg]\\
&\left.\left(k_1^2A_1\frac{e^{{\bf i}k_1\lvert x \rvert}}{4\pi\lvert x \rvert}+k_2^2A_2
\frac{e^{{\bf i}k_2\lvert x \rvert}}{4\pi\lvert x \rvert}+k_3^2A_3\frac{e^{{\bf i}k_3
\lvert x \rvert}}{4\pi\lvert x \rvert}\right)\right\},\\
\end{aligned}
\end{equation*}
\begin{equation*}
\begin{aligned} 
\Phi_{34}(x)
=&\;-\frac{T_1T_2}{\mu(\la+2\mu)} \frac{\pa }{\pa x_3} \left(A_1\frac{e^{{\bf i}
k_1\lvert x \rvert}}{4\pi\lvert x \rvert}+A_2\frac{e^{{\bf i}k_2\lvert x \rvert}}{4\pi
\lvert x \rvert}+A_3\frac{e^{{\bf i}k_3\lvert x \rvert}}{4\pi\lvert x \rvert}\right)\\
&+\frac{T_2}{\la+2\mu} \frac{\pa }{\pa x_3}\left(k_1^2A_1\frac{e^{{\bf i}k_1\lvert x \rvert}}
{4\pi\lvert x \rvert}+k_2^2A_2\frac{e^{{\bf i}k_2\lvert x \rvert}}{4\pi\lvert x \rvert}+k_3^2A_3
\frac{e^{{\bf i}k_3\lvert x \rvert}}{4\pi\lvert x \rvert}\right),\\
\Phi_{41}(x)
=&\;-\frac{T_1T_3}{\mu(\la+2\mu)}  \frac{\pa }{\pa x_1}\left(A_1\frac{e^{{\bf i}k_1\lvert x \rvert}}
{4\pi\lvert x \rvert}+A_2\frac{e^{{\bf i}k_2\lvert x \rvert}}{4\pi\lvert x \rvert}+A_3\frac{e^{{\bf i}
k_3\lvert x \rvert}}{4\pi\lvert x \rvert}\right)\\
&+\frac{T_3}{\la+2\mu} \frac{\pa }{\pa x_1}\left(k_1^2A_1\frac{e^{{\bf i}k_1\lvert x \rvert}}{4\pi
\lvert x \rvert}+k_2^2A_2\frac{e^{{\bf i}k_2\lvert x \rvert}}{4\pi\lvert x \rvert}+k_3^2A_3
\frac{e^{{\bf i}k_3\lvert x \rvert}}{4\pi\lvert x \rvert}\right),\\
\Phi_{42}(x)
=&\;-\frac{T_1T_3}{\mu(\la+2\mu)}  \frac{\pa }{\pa x_2}\left(A_1\frac{e^{{\bf i}k_1\lvert x \rvert}}
{4\pi\lvert x \rvert}+A_2\frac{e^{{\bf i}k_2\lvert x \rvert}}{4\pi\lvert x \rvert}+A_3\frac{e^{{\bf i}
k_3\lvert x \rvert}}{4\pi\lvert x \rvert}\right)\\
&+\frac{T_3}{\la+2\mu}\frac{\pa }{\pa x_2}\left(k_1^2A_1\frac{e^{{\bf i}k_1\lvert x \rvert}}
{4\pi\lvert x \rvert}+k_2^2A_2\frac{e^{{\bf i}k_2\lvert x \rvert}}{4\pi\lvert x \rvert}+k_3^2A_3
\frac{e^{{\bf i}k_3\lvert x \rvert}}{4\pi\lvert x \rvert}\right),\\
\Phi_{43}(x)
=&\;-\frac{T_1T_3}{\mu(\la+2\mu)}\frac{\pa }{\pa x_3}\left(A_1\frac{e^{{\bf i}k_1\lvert x \rvert}}
{4\pi\lvert x \rvert}+A_2\frac{e^{{\bf i}k_2\lvert x \rvert}}{4\pi\lvert x \rvert}+A_3\frac{e^{{\bf i}
k_3\lvert x \rvert}}{4\pi\lvert x \rvert}\right)\\
&+\frac{T_3}{\la+2\mu}\frac{\pa }{\pa x_3}\left(k_1^2A_1\frac{e^{{\bf i}k_1\lvert x \rvert}}
{4\pi\lvert x \rvert}+k_2^2A_2\frac{e^{{\bf i}k_2\lvert x \rvert}}{4\pi\lvert x \rvert}+k_3^2A_3
\frac{e^{{\bf i}k_3\lvert x \rvert}}{4\pi\lvert x \rvert}\right),\\
\Phi_{44}(x)
=&\;\frac{T_1^2}{\mu(\la+2\mu)}\left(A_1\frac{e^{{\bf i}k_1\lvert x \rvert}}{4\pi\lvert x \rvert}
+A_2\frac{e^{{\bf i}k_2\lvert x \rvert}}{4\pi\lvert x \rvert}+A_3\frac{e^{{\bf i}k_3\lvert x \rvert}}
{4\pi\lvert x \rvert}\right)\\
&+\frac{T_1(\la+3\mu)}{\mu(\la+2\mu)}\left(k_1^2A_1\frac{e^{{\bf i}k_1\lvert x \rvert}}{4\pi
\lvert x \rvert}+k_2^2A_2\frac{e^{{\bf i}k_2\lvert x \rvert}}{4\pi\lvert x \rvert}+k_3^2A_3
\frac{e^{{\bf i}k_3\lvert x \rvert}}{4\pi\lvert x \rvert}\right)\\
&+\left(k_1^4A_1\frac{e^{{\bf i}k_1\lvert x \rvert}}{4\pi\lvert x \rvert}+k_2^4A_2
\frac{e^{{\bf i}k_2\lvert x \rvert}}{4\pi\lvert x \rvert}+k_3^4A_3\frac{e^{{\bf i}k_3
\lvert x \rvert}}{4\pi\lvert x \rvert}\right).
\end{aligned}
\end{equation*}
\end{appendices}

\end {document}